\documentclass[11pt, reqno]{amsart}
\usepackage{amsmath,amsfonts,amsthm,amssymb,latexsym}
\usepackage[a4paper,margin=3cm]{geometry}
\usepackage{graphicx}
\usepackage{url}
\usepackage[latin1]{inputenc}
\usepackage[T1]{fontenc} 
\usepackage{graphicx}
\usepackage[english]{babel}
\usepackage{calc}
\usepackage{amsmath,amsthm}
\usepackage{url}
\usepackage{times, amssymb, amscd, mathrsfs, graphicx, color}  
\usepackage{enumerate}
\usepackage{hyperref}
\usepackage{dsfont}
\usepackage[all]{xy}
\selectlanguage{english}

\definecolor{darkred}{rgb}{0.9,0.1,0.1}

\setlength{\marginparwidth}{2cm}

\setlength{\marginparwidth}{2cm}

\newtheorem{theo}{Theorem}[section]

\newtheorem{coro}[theo]{Corollary}
\newtheorem{lem}[theo]{Lemma}

\newtheorem{Rq}[theo]{Remark}

\theoremstyle{plain}
\newtheorem*{hypo*}{Assumption}



\newcommand{\un}{\mathbf{1}} 
\newcommand{\N}{\mathbb{N}}                                              
\newcommand{\R}{\mathbb{R}}                                              

\newcommand{\E}{\mathbb{E}}                                            

\newcommand{\var}{\textrm{Var}}  
\newcommand{\cov}{\textrm{Cov}}





\title{Fleming-Viot processes : two explicit examples}

\date{\today}

\author{Bertrand~\textsc{Cloez}}  %
\address[Bertrand~\textsc{Cloez}]{umr inra-supagro MISTEA, Montpellier, France}%
\email{\url{bertrand.cloez@supagro-inra.fr}}%

\author{Marie-No\'{e}mie~\textsc{Thai}} %
\address[Marie-No\'{e}mie~\textsc{Thai}]{CEREMADE, Universit\'e Paris-Dauphine,
  France} %
\email{\url{noemiethai@yahoo.fr}}



\usepackage{float}

\parindent=0pt 

\begin{document}

\maketitle
\begin{abstract} 
The purpose of this paper is to extend the investigation of the Fleming-Viot process in discrete space started in a previous work to two specific examples. The first one corresponds to a random walk on the complete graph. Due to its geometry, we establish several explicit and optimal formulas for the Fleming-Viot process (invariant distribution, correlations, spectral gap). The second example corresponds to a Markov chain in a two state space. In this case, the study of the Fleming-Viot particle system is reduced to the study of birth and death process with quadratic rates.
\end{abstract} 
{\footnotesize \textbf{AMS 2000 Mathematical Subject Classification:} 60K35, 60B10, 37A25.}

{\footnotesize \textbf{Keywords:} Fleming-Viot process - quasi-stationary distributions - coupling - Wasserstein distance - chaos propagation - commutation relation.}

\renewcommand*\abstractname{ \ }
\begin{abstract}
\tableofcontents
\end{abstract}

\section{Introduction}
In discrete space, the Fleming-Viot particle system has been studied by many authors \cite{AFG,AFGJ,AT12,FM07,Maric14}. Such a system is a mean field particle system described in the following way: we consider $N$ copies of an absorbed Markov chain and, instead of being absorbed, one chain jumps randomly on the state of another one. It is well known that, when the number of copies tends to infinity, the empirical measure converges to the law of the initial chain conditioned not to be absorbed, see for instance \cite{CT15,MM00,GJ12,V11}. Convergence to equilibrium as time goes to infinity is less known. In \cite{MR06,V10}, this question is addressed for some models. Nevertheless, to our knowledge, there are few results on the expression of the invariant distribution or on the explicit rates of convergence. This paper is concerned with studying two specific models for which the invariant distribution is explicit and extend the investigation started by Cloez and Thai \cite{CT15}.\\
Let $Q=\left(Q_{i,j}; \ i,j \in F^* \cup \{ 0 \}\right)$ be the transition rates matrix of an irreducible and positive recurrent continuous time Markov process $(X_t)_{t\geq 0}$ on a countable state space $F=F^* \cup \{ 0 \}$. We think of $0$ as an absorbing state. Let $\mu$ be the initial law of $(X_t)_{t\geq 0}$ and let $\mu T_t$ be its law at time $t$ conditioned on non absorption up to time $t$. That is defined, for all non-negative function $f$ on $F^*$, by
$$
\mu T_t f = \frac{\mu P_t f}{ \mu P_t \un_{\{0\}^c} } = \frac{\sum_{y \in F^{*}} P_t f(y) \mu(y)}{\sum_{y \in F^{*}} P_t \un_{\{0\}^c}(y) \mu(y)},
$$
where $(P_t)_{t\geq 0}$ is the semigroup associated with the transition matrix $Q$ and we use the convention $f(0)=0$. For every $x\in F^*$, $k\in F^*$ and  non-negative function $f$ on $F^*$, we also set 
$$T_tf(x) = \delta_x T_t f \ \text{ and } \ \mu T_t (k) = \mu T_t \un_{\{k\}}, \quad \forall t\geq 0.
$$
A quasi-stationary distribution (QSD) for $Q$ is a probability measure $\nu_{\textrm{qs}}$ on $F^*$ satisfying, for every $t \geq 0$, $ \nu_{\textrm{qs}} T_t = \nu_{\textrm{qs}}$.

The particle system we are focusing on was initially introduced in \cite{DMG,MM00} for approximating the conditioned semigroup $(T_t)_{t\geq 0}$ and the QSD $\nu_{qs}$.
It is convenient to think of particles as being indistinguishable, and to consider the occupation number $\eta $ with, for $k \in F^*$ , $\eta(k)=\eta^{(N)}(k)$ representing the number of particles at site $k$. The configuration $(\eta_t)_{t\geq0}$ is a Markov process with state space $E=E^{(N)}$ defined by
$$
E=\left\{ \eta : F^* \to \N \ | \ \sum_{i \in F^*}  \eta(i) =N \right\}.
$$
Applying its generator to a bounded function $f$ gives
\begin{equation}
\label{eq:generator2}
\mathcal{L}f(\eta)=\mathcal{L}^{(N)}f(\eta) = \sum_{i\in F^{*}} \eta(i) \left[ \sum_{j \in F^{*}} (f(T_{i \rightarrow j} \eta)-f(\eta)) \left(Q_{i,j} + Q_{i,0} \dfrac{\eta(j)}{N-1} \right)\right],
\end{equation}
for every $\eta \in E$, where, if $\eta(i) \neq 0$, the configuration $T_{i \rightarrow j} \eta$ is defined by 
$$
T_{i \rightarrow j} \eta(i) = \eta(i)-1, \ T_{i \rightarrow j} \eta(j) = \eta(j)+1, \ \text{ and} \ T_{i \rightarrow j} \eta(k) = \eta(k) \quad k \notin \{i,j\}.
$$

The present paper is a continuation of \cite{CT15} in which the following limits are studied and quantified:
\begin{figure}[hbtp]
$$
\xymatrix{
    & \mu_t^{N} \ar[rd]^{t \rightarrow \infty} \ar[ld]_{N \rightarrow \infty} & \\
	\mu T_t \ar[rd]_{t \rightarrow \infty} &  & \mu_{\infty}^{N} \ar[ld]^{N \rightarrow \infty} \\
	& \nu_{qs} &
}
$$  
\end{figure}

where $\mu^{N}$ is the associated empirical distribution of the particle system defined, for $\eta \in E$, by
$$
\mu_t^{N}= \dfrac{1}{N}\sum_{k\in F^{*}} \eta(k) \delta_{\{k\}}.
$$
For countable space $F$, the ergodicity of the Fleming-Viot process is not guaranteed. In \cite{CT15}, Cloez and Thai show that under some conditions, the particle system converges exponentially fast to equilibrium for a suitable Wasserstein coupling distance. Let us recall the different distances given by the autors. For $\eta, \eta'\in E$, let $d$ be the distance defined by
\begin{equation}
d(\eta,\eta') = \frac{1}{2} \sum_{j\in F} \vert \eta(j) - \eta'(j)\vert,
\end{equation}
and for any two probability measures $\mu$ and $\mu'$ on $E$, let $\mathcal{W}_d(\mu, \mu')$ be the Wasserstein coupling distance between these two laws defined by
\begin{equation}
\mathcal{W}_d(\mu, \mu') = \inf_{\substack{X \sim \mu\\ \overline{X} \sim \mu'}} \E\left[d(X,\overline{X})\right],
\end{equation}
where the infimum runs over all the couples of random variables with marginal laws $\mu$ and $\mu'$. 
\begin{theo}[Theorem 1.1 of \cite{CT15}]
\label{th:tl-general}
Let $\lambda = \displaystyle \inf_{i,i'\in F^{*} }\left( Q_{i,i'} + Q_{i',i} + \sum_{j \neq i,i'} Q_{i,j} \wedge Q_{i',j} \right)$ and for $i \in F^{*}$,  $p_0(i)= Q_{i,0}$. If $\rho= \lambda -(\sup(p_0)-\inf(p_0))$ then for any processes $(\eta_{t})_{t>0}$ and $(\eta'_{t})_{t>0}$ generated by \eqref{eq:generator2}, and for any $t \geq 0$, we have
$$
\mathcal{W}_{d} (\mathrm{Law}(\eta_t), \mathrm{Law}(\eta'_t)) \leq e^{-\rho t} \mathcal{W}_{d} (\mathrm{Law}(\eta_0), \mathrm{Law}(\eta'_0)).
$$
In particular, if $\rho>0$ then there exists a unique invariant distribution $\nu_{N}$ satisfying for every $t \geq 0$,
$$
\mathcal{W}_{d} (\mathrm{Law}(\eta_t), \nu_{N}) \leq e^{-\rho t} \mathcal{W}_{d} (\mathrm{Law}(\eta_0), \nu_{N}).
$$
\end{theo}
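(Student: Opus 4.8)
The plan is to realise the two processes through a well-chosen Markovian coupling and to contract $d$ along it. I would build, for every pair of initial configurations, a Markovian coupling $(\eta_t,\eta'_t)_{t\ge 0}$ of two processes with generator \eqref{eq:generator2} whose coupled generator $\bar{\mathcal L}$ satisfies
\begin{equation}\label{eq:sketch-contr}
\bar{\mathcal L}\,d(\eta,\eta')\ \le\ -\rho\, d(\eta,\eta'),\qquad (\eta,\eta')\in E\times E .
\end{equation}
Given \eqref{eq:sketch-contr}, the process $e^{\rho t}d(\eta_t,\eta'_t)$ is a supermartingale (its drift $e^{\rho t}(\rho d+\bar{\mathcal L}d)$ is non-positive and bounded above, so localising over the jump times makes this rigorous), whence $\E[d(\eta_t,\eta'_t)]\le e^{-\rho t}\E[d(\eta_0,\eta'_0)]$. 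By the Markov property the law of each marginal at time $t$ depends only on its initial law, so for arbitrary initial laws I would run the coupling from an optimal transport plan between them — which exists since $E$ is discrete and $d\le N$ is bounded — and, bounding $\mathcal W_d$ by the cost of this coupling, obtain the first displayed inequality of the statement.

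To construct the coupling, match the $N$ particles of $\eta$ and $\eta'$: at each site $k$ declare $\min(\eta(k),\eta'(k))$ of them \emph{matched}, and pair the remaining $D:=d(\eta,\eta')$ \emph{free} particles of $\eta$ (located on the sites where $\eta>\eta'$) bijectively with the $D$ free particles of $\eta'$. For the mutation part $Q_{i,j}$, $j\in F^*$: matched particles jump to the same target in both copies; a free pair presently at $(a,b)$ is driven by the maximal coupling of the rate vectors $(Q_{a,j})_{j\in F^*}$ and $(Q_{b,j})_{j\in F^*}$ — within which, in particular, the moves $a\to b$ and $b\to a$ bring the two members of the pair onto the same site. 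For the selection part $Q_{i,0}\,\eta(j)/(N-1)$, read it as ``at rate $Q_{i,0}$ the particle teleports onto a uniformly chosen one of the $N-1$ other particles''; couple the two copies by teleporting onto the \emph{same} (matched) partner whenever the chosen partner is matched, and for a free pair couple the minimum of the two selection rates so that both members teleport simultaneously onto a common partner. A graphical construction with labelled particles turns this recipe into a genuine Markov process on $E\times E$.

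Only two kinds of transitions change $d$. First, a free pair at $(a,b)$ may \emph{coalesce} (both members land on the same site), which lowers $d$ by one: from the mutation part this occurs at rate at least $Q_{a,b}+Q_{b,a}+\sum_{j\ne a,b}Q_{a,j}\wedge Q_{b,j}\ge\lambda$, and from the selection part at rate at least $\bigl(p_0(a)\wedge p_0(b)\bigr)\tfrac{N-D}{N-1}\ge(\inf p_0)\tfrac{N-D}{N-1}$, the fraction $\tfrac{N-D}{N-1}$ being the chance that the common teleportation target is matched; every other move of a free pair, and every move of a matched particle, leaves $d$ unchanged or decreases it. Second, a matched particle may, by selection, teleport onto a free particle, thereby becoming free and raising $d$ by one; summing over matched particles this happens at total rate $\tfrac{D}{N-1}\sum_k\min(\eta(k),\eta'(k))\,p_0(k)\le\tfrac{D(N-D)}{N-1}\sup p_0$, where I use the identity $\sum_k\min(\eta(k),\eta'(k))=N-D$. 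Adding the contributions of the $D$ free pairs,
\[
\bar{\mathcal L}\,d(\eta,\eta')\ \le\ -D\lambda+\tfrac{D(N-D)}{N-1}\bigl(\sup p_0-\inf p_0\bigr)\ \le\ -D\bigl(\lambda-(\sup p_0-\inf p_0)\bigr)\ =\ -\rho\, d(\eta,\eta'),
\]
the middle step using $\tfrac{N-D}{N-1}\le 1$ (valid since $D\ge 1$) and $\sup p_0-\inf p_0\ge 0$; the case $D=0$ is trivial. It is crucial that $\lambda$ involves only transitions inside $F^*$. For the second part, since $d\le N$ the space $\bigl(\mathcal P(E),\mathcal W_d\bigr)$ is complete and, by \eqref{eq:sketch-contr}, the map sending an initial law to the time-$t$ law is $\mathcal W_d$-contractive, hence continuous; when $\rho>0$ the orbit $\bigl(\mathrm{Law}(\eta_t)\bigr)_t$ of any fixed start is $\mathcal W_d$-Cauchy as $t\to\infty$ (indeed $\mathcal W_d(\mathrm{Law}(\eta_{t+s}),\mathrm{Law}(\eta_t))\le e^{-\rho t}\mathcal W_d(\mathrm{Law}(\eta_s),\mathrm{Law}(\eta_0))\le Ne^{-\rho t}$), its limit $\nu_N$ is a fixed point of every time-$t$ map, it is unique because two invariant measures $\nu,\nu'$ satisfy $\mathcal W_d(\nu,\nu')\le e^{-\rho t}\mathcal W_d(\nu,\nu')$, and applying the first part with $\mathrm{Law}(\eta'_0)=\nu_N$ gives the announced convergence.

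The hard part is the estimate \eqref{eq:sketch-contr} with the \emph{sharp} constant $\rho$. One must arrange the coupling so that a free particle's \emph{own} selection step, when it lands on a matched particle, really does resolve its discrepancy — this is what produces the favourable $\inf p_0$ — while simultaneously controlling the opposite mechanism, a matched particle's selection step creating a discrepancy (the unfavourable $\sup p_0$), via the identity $\sum_k\min(\eta(k),\eta'(k))=N-D$, and bounding the mutation contribution by the Dobrushin-type coefficient $\lambda$ through a maximal coupling. Checking that all the combinatorial factors $\tfrac{N-D}{N-1}$ cooperate in the right direction and that the labelled construction is a bona fide Markov coupling on $E\times E$ is where the real work lies; the remaining steps (supermartingale/Gr\"onwall estimate, passage from Dirac to general initial laws, completeness and the fixed-point argument) are standard.
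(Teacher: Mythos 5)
Your proposal is correct and follows essentially the same route as the cited proof: this paper quotes the theorem from \cite{CT15} without reproving it, and the argument there is precisely such a particle-matching Markovian coupling (maximal coupling of the $Q$-moves giving the Dobrushin-type constant $\lambda$, coupled teleportations for the $Q_{i,0}\eta(j)/(N-1)$ part giving the $\pm(\sup p_0-\inf p_0)$ correction), yielding $\bar{\mathcal L}d\le-\rho\,d$ and hence the Wasserstein contraction and the fixed-point argument for $\nu_N$. Indeed, the coupling generator $\mathbb{L}=\mathbb{L}_Q+\mathbb{L}_p$ displayed in Section \ref{sect:two} for the two-point case is exactly the specialisation of the construction you describe, up to minor details in how the unmatched particles' selection moves are coupled, which do not affect the bound.
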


\bigskip

This theorem gives the existence and uniqueness of the invariant distribution, but this one is not explicit. Actually, there are few models for which an explicit formula of the invariant distribution is given. This observation is one of the motivations of the present paper.
So, it is interesting to consider and study a model satisfying the previous point : the example of random walk on a complete graph. An interesting point of the complete graph approach is that it permits to reduce the difficulties of the Fleming-Viot to the interaction. Due to its simple geometry, several explicit formulas are obtained such as the invariant distribution, the correlations and the spectral gap. It seems to be new in the context of Fleming-Viot particle systems. A second model for which the invariant distribution is explicit is the two point case, the study of the particle system is then reduced to a birth and death process with quadratic rates. The bound obtained in Theorem \ref{th:tl-general} is not optimal. Nevertheless, the coupling introduced in \cite{CT15} in order to prove Theorem \ref{th:tl-general}, permits us to obtain the spectral gap as rate of convergence. Moreover, we show that the spectral gap of the Fleming-Viot process is always bounded from below by a positive constant not depending on the number of particles.

\bigskip

The remainder of the paper is as follows. Section \ref{sect:GC} is dedicated to the study of random walk on the complete graph and Section \ref{sect:two} to that of the two point case.
\section{Complete graph dynamics}
\label{sect:GC}
In all this section, we study the example of a random walk on the complete graph. Let us fix $K\in \N^*$, $p>0$ and $N\in \N^*$, the dynamics of this example is as follows: we consider a model with $N$ particles and $K+1$ vertices $0,1,\dots, K$. The $N$ particles move on the $K$ vertices $1,\dots, K$ uniformly at random and jump to $0$ with rate $p$. When a particle reaches the node $0$, it jumps instantaneously over another particle chosen uniformly at random. This particle system corresponds to the model previously cited with parameters 
$$
Q_{i,j} = \frac{1}{K}, \quad \forall i,j \in F^{*}=\{1,\dots, K\}, i\neq j \ \text{and} \ Q_{i,0} = p, \quad \forall i\in F^{*}.
$$
The generator of the associated Fleming-Viot process is then given by
\begin{equation}
\label{eq:generator}
\mathcal{L}f(\eta) = \sum_{i=1}^{K} \eta(i) \left[  \sum_{j=1}^{K} (f(T_{i\rightarrow j} \eta)-f(\eta))\left( \dfrac{1}{K}+ p \dfrac{\eta(j)}{N-1} \right)\right],
\end{equation}
for every function $f$ and $\eta \in E$.

A process generated by \eqref{eq:generator} is an instance of inclusion processes studied in \cite{GRV10,GRV11,GRV12}. It is then related to models of heat conduction. One main point of \cite{GRV10,GRV11} is a criterion ensuring the existence and reversibility of an invariant distribution for the inclusion processes. In particular, they give an explicit formula of the invariant distribution of a process generated by \eqref{eq:generator}
 and we give this expression in Subsection \ref{subsec:invariant}. They also study different scaling limits which seem to be irrelevant for our problems.

Another application of this example comes from population genetics. Indeed, this model can also be referred as \textit{neutral evolution}, see for instance \cite{E12,W76}. More precisely, consider $N$ individuals possessing one type in $F^*=\{1, \dots, K\}$ at time $t$. Each pair of individuals
interacts at rate $p$. Upon an interacting event, one individual dies and the other one
reproduces. In addition, every individual changes its type (mutates) at rate $1$ and chooses uniformly at random a new
type in $F^*$. The measure $\mu_t^{N}$ gives the proportions of types. The kind of mutation we consider here is often referred as parent-independent or the house-of-cards model.

In all this section, for any probability measure $\mu$ on $E$, we set in a classical manner $\E_{\mu}[ \cdot ] = \displaystyle \int_{F^*} \E_x[ \cdot ] \mu(dx)$ and $\mathbb{P}_{\mu} = \E_\mu[ \mathds{1}_{\cdot}]$; similarly $\cov_\mu$ and $\var_\mu$ are defined with respect to $\E_\mu$.  

\subsection{The associated killed process}
We define the process $(X_t)_{t\geq 0}$ by setting
$$
X_t = \left\{
    \begin{array}{ll}
        Z_t & \mbox{if } t < \tau \\
        0 & \mbox{if } t \geq  \tau,
    \end{array}
\right.
$$
where $\tau$ is an exponential variable with mean $1/p$ and $(Z_t)_{t\geq 0}$ is the classical complete graph random walk (i.e. without extinction) on $\{1,\dots, K\}$. We have, for any bounded function $f$, 
$$
T_t f(x) = \E\left[ f(X_t) \ | \ X_0=x, X_t \neq 0 \right], \quad t\geq 0, x\in F^*.
$$
The conditional distribution of $X_t$ is simply given by the distribution of $Z_t$ :
$$
                         \mathbb{P} (X_t = i \ | \ X_t \neq 0) = \mathbb{P} (Z_t = i).
$$
The study of $(Z_t)_{t\geq 0}$ is trivial. Indeed, it converges exponentially fast to the uniform distribution $\pi_K$ on $\{1,\dots, K\}$. We deduce that for all $t  \geq 0$ and all initial distribution $\mu$, 
$$
d_{\textrm{TV}}(\mu T_{t}, \pi_{K}) = \sum_{i=1}^K \left|\mathbb{P}_{\mu} (X_t = i \ | \ \tau > t) - \pi_{K}(i)\right| \leq e^{- t}. 
$$
 
Thus in this case, the conditional distribution of $X$ converges exponentially fast to the Yaglom limit $\pi_K$.

\subsection{Correlations at fixed time}

The special form of $\mathcal{L}$, defined at \eqref{eq:generator}, makes the calculation of the two-particle correlations at fixed time easy. 
\begin{theo}[Two-particle correlations]
\label{th:cor-GC}
For all $k,l \in \lbrace 1, \dots , K\rbrace$, $k\neq l$ and any probability measure $\mu$ on $E$, we have for all $t\geq 0$
\begin{align*}
\cov_{\mu}(\eta_{t}(k),\eta_{t}(l))
& = \E_{\mu}\left[ \eta_{0}(k)\eta_{0}(l)\right] e^{-\frac{2K(N-1+p)}{K(N-1)}t} \\
&\quad + \dfrac{-N+1+2pN}{K(N-1+2p)} (\E_{\mu}\left[ \eta_{0}(k)\right]+\E_{\mu}\left[ \eta_{0}(l)\right])e^{-t}\\
&\quad - \E_{\mu}\left[ \eta_{0}(k)\right]\E_{\mu}\left[ \eta_{0}(l)\right]e^{-2t} + \dfrac{-N^{2}(p+1)+N}{K^{2}(N-1+p)}.
\end{align*}
\end{theo}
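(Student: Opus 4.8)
The plan is to exploit a moment-closure property special to the complete graph: the generator $\mathcal{L}$ of \eqref{eq:generator} preserves the finite-dimensional space $\mathrm{span}\{1,\ \eta(k),\ \eta(l),\ \eta(k)\eta(l)\}$, so that the first moments and the mixed second moment solve a closed, triangular linear system of ODEs. Since $E$ is a finite set, $(\eta_t)_{t\ge 0}$ is a finite Markov chain and Kolmogorov's forward equation holds with no integrability caveat: $\tfrac{d}{dt}\E_\mu[f(\eta_t)]=\E_\mu[\mathcal{L}f(\eta_t)]$ for every $f:E\to\R$. Thus the proof reduces to (i) computing $\mathcal{L}f$ on the three non-trivial generators, (ii) solving the resulting ODEs, and (iii) assembling the covariance.

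For the first moments, one uses $\eta(k)\circ T_{i\to j}=\eta(k)+\un_{\{j=k\}}-\un_{\{i=k\}}$ in \eqref{eq:generator} together with the complete-graph identities $\sum_{j=1}^K\frac1K=1$ and $\sum_{j=1}^K\eta(j)=N$; every quadratic-in-$\eta$ contribution then cancels and one gets $\mathcal{L}\eta(k)=\frac NK-\eta(k)$. Hence $u_k(t):=\E_\mu[\eta_t(k)]$ satisfies $u_k'=\frac NK-u_k$, giving $u_k(t)=\frac NK+\big(\E_\mu[\eta_0(k)]-\frac NK\big)e^{-t}$, and likewise for $l$; in particular $u_k(t)u_l(t)$ is an explicit combination of $1,\ e^{-t},\ e^{-2t}$.

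For the mixed second moment with $k\ne l$, I would expand $(\eta(k)\eta(l))\circ T_{i\to j}-\eta(k)\eta(l)$ and sum against the rate $\frac1K+p\frac{\eta(j)}{N-1}$. The products $\un_{\{j=k\}}\un_{\{j=l\}}$ and $\un_{\{i=k\}}\un_{\{i=l\}}$ vanish since $k\ne l$, and the remaining index sums collapse via $\sum_j\eta(j)=N$; the outcome is the affine identity $\mathcal{L}(\eta(k)\eta(l))=\frac{N-1}{K}\big(\eta(k)+\eta(l)\big)-\frac{2(N-1+p)}{N-1}\eta(k)\eta(l)$, which is exactly the closure. Consequently $v(t):=\E_\mu[\eta_t(k)\eta_t(l)]$ solves the linear first-order ODE $v'=\frac{N-1}{K}\big(u_k(t)+u_l(t)\big)-\frac{2(N-1+p)}{N-1}v$; inserting the expression for $u_k+u_l$ and solving by variation of constants (there is no resonance, since $\frac{2(N-1+p)}{N-1}=\frac{2K(N-1+p)}{K(N-1)}>2$ for $p>0$, whereas the forcing frequencies are $0$ and $1$) yields $v(t)$ as an explicit combination of $e^{-\frac{2K(N-1+p)}{K(N-1)}t}$, $e^{-t}$ and $1$.

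Finally, write $\cov_\mu(\eta_t(k),\eta_t(l))=v(t)-u_k(t)u_l(t)$, expand, and collect the coefficients of $e^{-\frac{2K(N-1+p)}{K(N-1)}t}$, $e^{-t}$, $e^{-2t}$ and the constant term; this produces the stated formula, and one checks that the constant term is $\frac{-N^2(p+1)+N}{K^2(N-1+p)}$, which is also the value of $\cov_{\nu_N}(\eta(k),\eta(l))$ at equilibrium. The only genuinely delicate step is the generator computation for $\eta(k)\eta(l)$: one must keep both the mutation contribution (rate $1/K$) and the Fleming–Viot interaction contribution (rate $p\eta(j)/(N-1)$), and track precisely which indicator products survive when $k\ne l$; everything after that is elementary ODE bookkeeping. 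A small case such as $K=N=2$, where $E=\{(2,0),(1,1),(0,2)\}$, provides a quick consistency check for the affine form of $\mathcal{L}(\eta(k)\eta(l))$ and for the $t\to\infty$ limit.
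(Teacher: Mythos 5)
Your proposal is correct and takes essentially the same route as the paper's proof: compute $\mathcal{L}\varphi_k(\eta)=\tfrac{N}{K}-\eta(k)$ and the closed affine identity $\mathcal{L}\bigl(\eta(k)\eta(l)\bigr)=\tfrac{N-1}{K}\bigl(\eta(k)+\eta(l)\bigr)-\tfrac{2(N-1+p)}{N-1}\,\eta(k)\eta(l)$, then apply Kolmogorov's equation and solve the resulting triangular linear ODE system before assembling $\cov_\mu(\eta_t(k),\eta_t(l))=v(t)-u_k(t)u_l(t)$. As a minor remark, your first-moment solution $u_k(t)=\tfrac{N}{K}+\bigl(\E_\mu[\eta_0(k)]-\tfrac{N}{K}\bigr)e^{-t}$ is the one consistent with the initial condition, whereas the paper's displayed formula omits the $-\tfrac{N}{K}$ inside the exponential term.
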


\begin{Rq}[Limit $t\rightarrow + \infty$]
By the previous theorem, we find for any probability measure $\mu$
\begin{align*}
\lim_{t\rightarrow +\infty} \cov_{\mu}(\eta_{t}(k),\eta_{t}(l)) 
&= \dfrac{-N^{2}(p+1)+N}{K^{2}(N-1+p)}= \cov(\eta(k),\eta(l)),
\end{align*}
where $\eta$ is distributed according to the invariant distribution; it exists since the state space is finite, see the next section.
\end{Rq}

\begin{Rq}[Limit $N\rightarrow + \infty$]

If  $\cov_{\mu}\left(\eta_{0}(k),\eta_{0}(l)\right)\neq 0$ then for all $k,l \in \lbrace 1, \dots , K\rbrace$, $k\neq l$ and any probability measure $\mu$, we have
\begin{align*}
 \cov_{\mu}\left(\dfrac{\eta_{t}(k)}{N},\dfrac{\eta_{t}(l)}{N}\right)
&\sim_{N}  e^{-2t } \cov_{\mu}\left(\dfrac{\eta_{0}(k)}{N},\dfrac{\eta_{0}(l)}{N}\right),
\end{align*}
where $u_{N}\sim_{N} v_{N}$ iff $\lim \limits_{N\rightarrow +\infty} \dfrac{u_{N}}{v_{N}} = 1$.
\end{Rq}

\begin{proof}[Proof of Theorem \ref{th:cor-GC}]
For $k,l \in \{1,..,K\}$, let $\psi_{k,l}$ be the function $\eta\mapsto \eta(k)\eta(l)$. Applying the generator \eqref{eq:generator} to $\psi_{k,l}$ we obtain 
\begin{align*}
\mathcal{L}\psi_{k,l}(\eta)&= -\dfrac{2K(N-1+p)}{K(N-1)} \eta(k)\eta(l) + \dfrac{N-1}{K}(\eta(k)+\eta(l)).
\end{align*}
So, for all $t\geq0$,
$$\mathcal{L}\psi_{k,l}(\eta_{t}) = -\dfrac{2K(N-1+p)}{K(N-1)} \eta_{t}(k)\eta_{t}(l) + \dfrac{N-1}{K}(\eta_{t}(k)+\eta_{t}(l)).
$$
Using Kolmogorov's equation, we have
\begin{equation}
\label{eq:EDO}
\partial_{t} \E_{\mu}(\eta_{t}(k)\eta_{t}(l)) = -\dfrac{2K(N-1+p)}{K(N-1)} \E_{\mu}(\eta_{t}(k)\eta_{t}(l)) + \dfrac{N-1}{K}(\E_{\mu}(\eta_{t}(k))+\E_{\mu}(\eta_{t}(l))).
\end{equation}
Now if $\varphi_{k}(\eta) =\eta(k)$ then $\mathcal{L}\varphi_{k}(\eta) = \dfrac{N}{K} - \eta(k)$. We deduce that, for every $t\geq 0$,
$$
\partial_{t} \E_{\mu}(\eta_{t}(k)) = \dfrac{N}{K} - \E_{\mu}(\eta_{t}(k)) \ \text{ and } \ \E_{\mu}(\eta_{t}(k)) = \E_{\mu}(\eta_{0}(k))e^{-t} + \dfrac{N}{K}.
$$
Solving equation \eqref{eq:EDO} ends the proof.
\end{proof}

\subsection{Properties of the invariant measure}
\label{subsec:invariant} 
As $(\eta_t)_{t\geq 0}$ is an irreducible Markov chain on a finite state space, it is straightforward that it admits a unique invariant measure. In fact, this invariant distribution is reversible and we know its expression. 

\begin{theo}[Invariant distribution]
The process $(\eta_{t})_{t\geq 0}$ admits a unique invariant and reversible measure $\nu_N$, which is defined, for every $\eta \in E$, by
\begin{equation*}
\nu_{N}(\{\eta\})= Z^{-1} \prod_{i=1}^{K} \prod_{j=0}^{\eta(i)-1} \dfrac{N-1+Kpj}{j+1},
\end{equation*}
where $Z$ is a normalizing constant.
\end{theo}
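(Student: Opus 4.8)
The plan is to exploit the fact that $(\eta_t)_{t\geq 0}$ is an irreducible continuous-time Markov chain on the \emph{finite} set $E$: since $Q_{i,j}=1/K>0$ for all $i\neq j$, from any configuration $\eta$ with $\eta(i)\geq 1$ a particle at $i$ moves to any prescribed site $j$ at rate $\eta(i)\bigl(\tfrac1K+p\tfrac{\eta(j)}{N-1}\bigr)\geq \eta(i)/K>0$, so every state of $E$ is reachable from every other one. Hence there is a unique invariant probability measure, and it is enough to check that the measure $\nu_N$ of the statement satisfies the detailed balance equations with respect to the jump rates read off from \eqref{eq:generator}; reversibility, and the identification of $\nu_N$ with the invariant measure, then follow from the classical fact that a probability measure satisfying detailed balance is stationary. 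Note also that all the factors $\frac{N-1+Kpj}{j+1}$ are positive (for $N\geq 2$ and $p>0$), so that $\nu_N$ is a well-defined positive probability measure once $Z$ is taken to be the sum of the unnormalised weights over $E$. This also matches the reversible measure of the inclusion process of \cite{GRV10,GRV11}, but we verify it directly.

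The only transitions of the chain are of the form $\eta \to T_{i\rightarrow j}\eta$ with $i\neq j$ and $\eta(i)\geq 1$, occurring at rate $\eta(i)\bigl(\tfrac1K+p\tfrac{\eta(j)}{N-1}\bigr)$. The reverse transition is $T_{i\rightarrow j}\eta \to \eta = T_{j\rightarrow i}(T_{i\rightarrow j}\eta)$: writing $\eta'=T_{i\rightarrow j}\eta$, so that $\eta'(i)=\eta(i)-1$ and $\eta'(j)=\eta(j)+1$, this reverse transition occurs at rate $\eta'(j)\bigl(\tfrac1K+p\tfrac{\eta'(i)}{N-1}\bigr)=(\eta(j)+1)\bigl(\tfrac1K+p\tfrac{\eta(i)-1}{N-1}\bigr)$. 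Thus detailed balance amounts to verifying, for every $i\neq j$ and every $\eta$ with $\eta(i)\geq 1$,
\begin{equation*}
\nu_N(\{\eta\})\,\eta(i)\Bigl(\tfrac1K+p\tfrac{\eta(j)}{N-1}\Bigr)=\nu_N(\{T_{i\rightarrow j}\eta\})\,(\eta(j)+1)\Bigl(\tfrac1K+p\tfrac{\eta(i)-1}{N-1}\Bigr).
\end{equation*}

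To conclude I would compute the ratio $\nu_N(\{T_{i\rightarrow j}\eta\})/\nu_N(\{\eta\})$. In passing from $\eta$ to $\eta'=T_{i\rightarrow j}\eta$ only the factors attached to sites $i$ and $j$ change: the product over $i$ loses its top term $\frac{N-1+Kp(\eta(i)-1)}{\eta(i)}$, while the product over $j$ gains the term $\frac{N-1+Kp\eta(j)}{\eta(j)+1}$, so that
\begin{equation*}
\frac{\nu_N(\{T_{i\rightarrow j}\eta\})}{\nu_N(\{\eta\})}=\frac{\eta(i)}{N-1+Kp(\eta(i)-1)}\cdot\frac{N-1+Kp\eta(j)}{\eta(j)+1}.
\end{equation*}
Substituting this into the detailed balance identity, cancelling the factor $\eta(j)+1$, and using $\tfrac1K+p\tfrac{m}{N-1}=\frac{N-1+Kpm}{K(N-1)}$ for $m\in\{\eta(j),\,\eta(i)-1\}$, both sides collapse to $\dfrac{\eta(i)\,(N-1+Kp\eta(j))}{K(N-1)}$; this holds in particular when $\eta(j)=0$, where the products involving $j$ are read as empty. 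There is no real obstacle here beyond keeping the bookkeeping of the two product factors straight; the only point that deserves an explicit word is the reduction to detailed balance, which is the standard fact recalled in the first paragraph.
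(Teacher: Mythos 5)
Your proposal is correct and rests on the same mechanism as the paper's proof, namely the detailed balance equation $\nu(\{\eta\})C(\eta,T_{i\rightarrow j}\eta)=\nu(\{T_{i\rightarrow j}\eta\})C(T_{i\rightarrow j}\eta,\eta)$ with the rates read off from \eqref{eq:generator}; the only difference is one of direction, since the paper derives the formula from a product-form ansatz while you verify the stated formula directly (which also makes the sufficiency step, implicit in the paper, explicit). Your ratio computation and the reduction via $\tfrac1K+p\tfrac{m}{N-1}=\tfrac{N-1+Kpm}{K(N-1)}$ check out, including the boundary case $\eta(j)=0$.
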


This result is a slight generalisation of \cite[Section 4]{GRV10} and \cite[Theorem 2.1]{GRV11}.

\begin{proof}[Proof]
A measure $\nu$ is reversible if and only if it satisfies the following balance equation
\begin{equation}
\label{eq:balance}
\nu(\{\eta\}) C(\eta,\xi) = \nu(\{\xi\}) C(\xi,\eta)\\
\end{equation}
where $ \xi = T_{i \rightarrow j} \eta$ and $C(\eta,\xi) = \mathcal{L} \mathds{1}_{\xi} (\eta) = \eta(i) (K^{-1}+ p\eta(j)(N-1)^{-1})$.

Due to the geometry of the complete graph, it is natural to consider that $\nu$ has the following form 
$$
\nu(\{\eta\})= \frac{1}{Z}\prod_{i=1}^{K} l(\eta(i)),
$$ 
where $l: \{0, \dots, N\} \rightarrow [0,1]$ is a function and $Z$ is a normalizing constant. From \eqref{eq:balance}, we have
\begin{equation*}
l(\eta(i))l(\eta(j))\eta(i) (N-1+ Kp\eta(j)) = l(\eta(i)-1)l(\eta(j)+1)(\eta(j)+1) (N-1+ Kp(\eta(i)-1)),
\end{equation*}
for all $\eta \in E$ and $i,j \in \{1, \dots K \}$. Hence, 
$$
\frac{l(n)}{l(n-1)}\frac{n}{N-1 +Kp(n-1)} = \frac{l(m)}{l(m-1)}\frac{m}{N-1+Kp(m-1)} = u,
$$
for every $m,n\in\{ 1,\dots,N\}$ and some $u \in \R$. Finally,

\begin{eqnarray*}
\nu(\{\eta\})= \prod_{i=1}^{K} \left( u^{\eta(i)} \prod_{j=0}^{\eta(i)-1} \dfrac{N-1+Kpi}{i+1}  l(0)\right) =  l(0)^K u^{N} \prod_{i=1}^{K}  \prod_{j=0}^{\eta(i)-1} \dfrac{N-1+Kpj}{j+1},
\end{eqnarray*}
and $Z= 1/(l(0)^K u^{N})$.
\end{proof}

In particular, we have directly

\begin{coro}[Invariant distribution when $p=1/K$]
If $p=1/K$ then the process $(\eta_{t})_{t\geq 0}$ admits a unique invariant and reversible measure $\nu_N$, which is defined, for every $\eta \in E$, by
\begin{equation*}
\nu_{N}(\{\eta\})= Z^{-1} \prod_{i=1}^{K} \binom {N-2+\eta(i)} {N-2},
\end{equation*}
where $Z$ is a normalizing constant given by
$$
Z= \binom {(K+1)N-K-1} {KN-K-1}.
$$
\end{coro}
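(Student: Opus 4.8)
The plan is to specialize the formula of the previous theorem to $p=1/K$ and then to compute the normalizing constant by a generating-function argument. With $Kp=1$, each inner product over $j$ collapses:
$$
\prod_{j=0}^{\eta(i)-1} \frac{N-1+Kpj}{j+1} = \prod_{j=0}^{\eta(i)-1}\frac{N-1+j}{j+1} = \frac{(N-1)N\cdots(N+\eta(i)-2)}{\eta(i)!} = \binom{N-2+\eta(i)}{N-2},
$$
with the empty-product convention when $\eta(i)=0$ (consistent with $\binom{N-2}{N-2}=1$). This immediately yields the announced shape $\nu_N(\{\eta\}) = Z^{-1}\prod_{i=1}^{K}\binom{N-2+\eta(i)}{N-2}$, and uniqueness and reversibility are inherited from the previous theorem.

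It then remains to evaluate $Z = \sum_{\eta\in E}\prod_{i=1}^{K}\binom{N-2+\eta(i)}{N-2}$, the sum running over all $(\eta(1),\dots,\eta(K))\in\N^K$ with $\eta(1)+\cdots+\eta(K)=N$. I would invoke the negative binomial series $\sum_{n\geq 0}\binom{n+N-2}{N-2}x^n = (1-x)^{-(N-1)}$, valid for $|x|<1$. Raising it to the $K$-th power gives $(1-x)^{-K(N-1)} = \sum_{m\geq 0}\binom{m+K(N-1)-1}{K(N-1)-1}x^m$, and extracting the coefficient of $x^N$ from both sides produces exactly
$$
Z = \binom{N+K(N-1)-1}{K(N-1)-1} = \binom{(K+1)N-K-1}{KN-K-1},
$$
which is the stated normalizing constant.

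This is essentially a routine specialization, so there is no real obstacle; the only point that needs a little care is the evaluation of $Z$, where one must recognize the $K$-fold convolution of the ``stars and bars'' weights. If one prefers to avoid generating functions, the underlying identity $\sum_{n_1+\cdots+n_K=N}\prod_{i=1}^{K}\binom{n_i+r-1}{r-1} = \binom{N+Kr-1}{Kr-1}$ can be proved by induction on $K$ from the Vandermonde-type identity $\sum_{a+b=N}\binom{a+r-1}{r-1}\binom{b+s-1}{s-1} = \binom{N+r+s-1}{r+s-1}$, applied with $r=N-1$; but the generating-function route is the cleanest.
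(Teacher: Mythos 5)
Your proposal is correct and follows essentially the paper's route: the paper obtains this corollary ``directly'' by setting $Kp=1$ in the general product formula, and the normalization $Z$ comes from the same $K$-fold convolution identity (the Vandermonde-type formula, itself proved via the power series of $(z/(1-z))^{n}$) that your negative binomial generating-function computation encodes.
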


\begin{coro}[Marginal laws when $p=1/K$] 
If $p=1/K$ then for all $i\in \left\lbrace 1, \dots ,K\right\rbrace$ we have  
$$
\mathbb{P}_{\nu_{N}}(\eta(i)=x) = \dfrac{1}{Z} \binom {N-2+x} {N-2} \binom {KN-K-x} {(K-1)N-K},
$$
\end{coro}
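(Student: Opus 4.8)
The plan is to obtain the marginal law by a direct summation of the product formula for $\nu_N$ proved in the previous corollary. For $p=1/K$ we have $\nu_N(\{\eta\}) = Z^{-1}\prod_{j=1}^{K}\binom{N-2+\eta(j)}{N-2}$, so fixing $\eta(i)=x$, factoring out the $i$-th term and summing over the remaining coordinates gives
\begin{equation*}
\mathbb{P}_{\nu_N}(\eta(i)=x) = \frac{1}{Z}\binom{N-2+x}{N-2}\sum_{\substack{(n_j)_{j\neq i}\in\N^{K-1}\\ \sum_{j\neq i} n_j = N-x}}\ \prod_{j\neq i}\binom{N-2+n_j}{N-2}.
\end{equation*}
Here one may let each $n_j$ range over all of $\N$ since the constraint $\sum_{j\neq i}n_j=N-x$ already forces $n_j\le N$, so membership in $E$ is automatic; in particular the answer does not depend on $i$, as it should.

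The key step is then to evaluate the inner sum, which I would recognise as the coefficient of $z^{N-x}$ in the $(K-1)$-fold Cauchy product of the series $\sum_{n\ge 0}\binom{N-2+n}{N-2}z^n$. Invoking the classical identity $\sum_{n\ge 0}\binom{N-2+n}{N-2}z^n = (1-z)^{-(N-1)}$ (for $|z|<1$), this product equals $(1-z)^{-(K-1)(N-1)}$, whose $z^{N-x}$ coefficient is $\binom{(K-1)(N-1)+N-x-1}{(K-1)(N-1)-1}$. Simplifying the exponents --- $(K-1)(N-1)+N-x-1 = KN-K-x$ and $(K-1)(N-1)-1 = (K-1)N-K$ --- turns this into $\binom{KN-K-x}{(K-1)N-K}$, which is precisely the claimed formula.

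Finally, as a built-in check (and to recover the value of $Z$ stated in the previous corollary), I would sum the resulting expression over $x\in\{0,\dots,N\}$: rewriting $\binom{KN-K-x}{(K-1)N-K}=\binom{KN-K-x}{N-x}$, the sum $\sum_{x}\binom{N-2+x}{N-2}\binom{KN-K-x}{N-x}$ is the coefficient of $z^N$ in $(1-z)^{-(N-1)}(1-z)^{-(K-1)(N-1)}=(1-z)^{-K(N-1)}$, namely $\binom{(K+1)N-K-1}{KN-K-1}$, in agreement with the given $Z$. There is no genuinely hard step in this argument: the only thing requiring care is the bookkeeping of the exponents, together with the repeated use of $\binom{a}{b}=\binom{a}{a-b}$ needed to bring the binomial coefficients into the displayed form, and a quick verification that the support $0\le x\le N$ matches the range in which the final expression is nonzero.
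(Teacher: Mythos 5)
Your proposal is correct and follows essentially the same route as the paper: both factor out the $i$-th coordinate and evaluate the remaining constrained sum by a Vandermonde-type convolution, which the paper quotes as a binomial identity justified by exactly the generating-function computation you carry out explicitly with $(1-z)^{-(N-1)}$. The exponent bookkeeping and the consistency check against the stated value of $Z$ are both accurate.
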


\begin{proof}[Proof]
Firstly let us recall the Vandermonde binomial convolution type formula: let $n,n_1, \dots, n_p$ be some non-negative integers satisfying $\displaystyle \sum_{i=1}^p n_i = n$,
we have
\begin{equation*}
\binom{r-1}{n-1}= \sum_{r_1 + \dots + r_p= r} \ \prod_{j=1}^p \binom{r_j-1}{n_j-1}.
\end{equation*}
The proof is based on the power series decomposition of $z \mapsto \left(z/(1-z)\right)^{n}=  \displaystyle \prod_{i=1}^p \left(z/(1-z)\right)^{n_{i}}.$
Using this formula, we find
\begin{align*}
\mathbb{P}_{\nu_{N}}(\eta(i)=x)
&= \sum_{\overline{x}\in E_{1}} \mathbb{P}_{\nu_{N}}(\eta=(x_{1}, \dots ,x_{i-1},x,x_{i+1} \dots , x_{K}))\\
&= \dfrac{1}{Z}\binom {N-2+x} {N-2} \sum_{\overline{x}\in E_{1}} \prod_{l=1}^{i-1}\prod_{l=i+1}^{K} \binom {N-2+x_{l}} {N-2}\\
&=\dfrac{1}{Z} \binom {N-2+x} {N-2} \binom {(K-1)(N-1)+N-x-1} {(K-1)(N-1)-1},\\
\end{align*}
where $E_{1} = \left\lbrace \overline{x}=(x_{1}, \dots ,x_{i-1},x_{i+1} \dots , x_{K}) | x_{1}+ \dots +x_{i-1}+x_{i+1} \dots + x_{K} = N-x \right\rbrace$.
\end{proof}

We are now able to express the particle correlations under this invariant measure.

\begin{theo}[Correlation estimates]
\label{theo: cor}
For all $i\neq j \in \lbrace 1, \dots, K\rbrace$, we have
\begin{equation*}
\vert \cov_{\nu_{N}}(\eta(i)/N,\eta(j)/N)\vert  \sim_{N} \dfrac{p+1}{K^{2}N},
\end{equation*}
 
\end{theo}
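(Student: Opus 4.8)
The plan is to avoid any direct computation with the product formula for $\nu_N$ and instead read off the exact value of $\cov_{\nu_N}(\eta(i),\eta(j))$ from the fixed-time correlation formula of Theorem~\ref{th:cor-GC}. Since $(\eta_t)_{t\geq 0}$ is irreducible on the finite set $E$, it converges in distribution to $\nu_N$ as $t\to\infty$; because $E$ is finite every function on $E$ is bounded, so $\E_\mu[\eta_t(i)\eta_t(j)]\to\E_{\nu_N}[\eta(i)\eta(j)]$ and $\E_\mu[\eta_t(i)]\to\E_{\nu_N}[\eta(i)]$ for any initial law $\mu$. Letting $t\to\infty$ in the expression of Theorem~\ref{th:cor-GC} (all three exponential terms vanish) therefore yields
$$
\cov_{\nu_N}(\eta(i),\eta(j)) = \frac{-N^{2}(p+1)+N}{K^{2}(N-1+p)},
$$
which is precisely the content of the first Remark following that theorem.

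Next I would divide by $N^{2}$, using bilinearity of the covariance, to obtain
$$
\cov_{\nu_N}\!\left(\frac{\eta(i)}{N},\frac{\eta(j)}{N}\right) = \frac{1}{N^{2}}\cdot\frac{-N^{2}(p+1)+N}{K^{2}(N-1+p)} = \frac{-(p+1)+\frac{1}{N}}{K^{2}(N-1+p)}.
$$
The numerator tends to $-(p+1)$ and the denominator equals $K^{2}N\bigl(1+\tfrac{p-1}{N}\bigr)$, so $N\cdot\cov_{\nu_N}(\eta(i)/N,\eta(j)/N)\to -(p+1)/K^{2}$ as $N\to\infty$. Equivalently the ratio of $\cov_{\nu_N}(\eta(i)/N,\eta(j)/N)$ to $-(p+1)/(K^{2}N)$ tends to $1$, and taking absolute values gives the stated equivalence $\lvert\cov_{\nu_N}(\eta(i)/N,\eta(j)/N)\rvert\sim_N (p+1)/(K^{2}N)$.

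There is essentially no serious obstacle here: the only point deserving a line of care is the passage $t\to\infty$, namely that $\nu_N$ is indeed the weak limit of the time-$t$ marginals and that weak convergence on the finite space $E$ transfers to the first and second moments entering the covariance. Once that is granted the statement reduces to a one-line asymptotic expansion. One could alternatively derive the exact covariance directly from the product form of $\nu_N$ together with the marginal laws and the Vandermonde-type identity used earlier in the section, but this route is strictly longer and brings nothing new, so I would not pursue it.
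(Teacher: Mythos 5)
Your proposal is correct, but it follows a genuinely different route from the paper's own proof. The paper never goes back to Theorem~\ref{th:cor-GC}: it first uses the exchangeability of the sites under $\nu_N$ (all rates are symmetric in $i\in\{1,\dots,K\}$, so $\eta(1),\dots,\eta(K)$ are identically distributed) together with the constraint $\sum_{i=1}^K\eta(i)=N$ to write $\cov_{\nu_N}(\eta(i)/N,\eta(j)/N)=-\var_{\nu_N}(\eta(i)/N)/(K-1)$, and then obtains the exact second moment from the stationarity identity $\int\mathcal{L}(\eta(i)^2)\,d\nu_N=0$ (using the expression of $\mathcal{L}(\eta(i)^2)$ computed in the spectral subsection), which gives the closed form $\var_{\nu_N}(\eta(i))=\frac{N(K-1)(Np+N-1)}{K^{2}(N-1+p)}$ and hence the equivalence. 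Your argument instead reads the exact stationary covariance off the constant term of Theorem~\ref{th:cor-GC} by letting $t\to\infty$, justified by irreducibility of the chain on the finite space $E$ (this is precisely the content of the first Remark following that theorem), and then does the elementary $N$-asymptotics; the limit value $\frac{-N^{2}(p+1)+N}{K^{2}(N-1+p)}$ indeed coincides with what the paper's stationary computation yields, so the two derivations are consistent and your conclusion $\bigl|\cov_{\nu_N}(\eta(i)/N,\eta(j)/N)\bigr|\sim_N\frac{p+1}{K^{2}N}$ is correct. What your route buys is brevity, since the time-dependent formula has already been established; what the paper's route buys is the explicit variance $\var_{\nu_N}(\eta(i))$, which is reused verbatim in the proof of the subsequent corollary on convergence to the QSD (via Cauchy--Schwarz). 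Note that your approach also delivers that variance if you add the one-line exchangeability remark $\var_{\nu_N}(\eta(i))=-(K-1)\cov_{\nu_N}(\eta(i),\eta(j))$, and that the passage to the limit you flag as the only delicate point is indeed harmless here, exactly for the reason you give.
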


\begin{proof}[Proof]
Let $\eta$ be a random variable with law $\nu_N$. As $\eta(1), \dots, \eta(K)$ are identically distributed and $\displaystyle \sum_{i=1}^{K} \eta(i) = N$ we have
$$
\cov_{\nu_N}(\eta(i)/N,\eta(j)/N) = -\dfrac{\var_{\nu_N} (\eta(i)/N)}{K-1}.
$$
Using the results of Section \ref{sect:spectre}, we have
\begin{align*}
\mathcal{L}(\eta(i)^{2}) 
&= \eta(i)^{2}\left[-2-\dfrac{2p}{N-1}\right] + \eta(i) \left[\dfrac{2N}{K} + \dfrac{2pN}{N-1} + \dfrac{K-2}{K}\right] + \dfrac{N}{K}.
\end{align*}
Using the fact that $ \displaystyle \int \mathcal{L}(\eta(i)^{2}) d\nu_{N} = 0$ and $ \displaystyle \int \eta(i) d\nu_{N} = \frac{N}{K}$, we deduce that
\begin{align*}
\int \eta(i)^{2} d\nu_{N} &= \dfrac{N \left[(2N+K-2)(N-1) + 2KNp + K(N-1)\right]} {2K^{2}(N-1+p)}.
\end{align*} 

Finally,
\begin{align*}
\var_{\nu_{N}}(\eta(i))
&= \int \eta(i)^{2} d\nu_{N} - \left(\int \eta(i) d\nu_{N}\right)^{2}=\dfrac{N(K-1)(Np+N-1)}{K^{2}(N-1+p)}, 
\end{align*}         
and thus, for $i\neq j$,  
\begin{align*}
\vert \cov_{\nu_{N}}(\eta(i)/N,\eta(j)/N)\vert &\sim_{N} \dfrac{p+1}{K^{2}N}.
\end{align*}               

\end{proof}

\begin{Rq}[Proof through coalescence methods]
Maybe we can use properties of Kingman's coalescent type process (which is a dual process) to recover some of our results (as for instance the previous correlation estimates). Indeed, after an interacting event, all individuals evolve independtly and it is enough to look when the first
mutation happens (backwards in time) on one of the genealogical tree branches. Nevertheless, we prefer to use another approach based on Markovian techniques. 
\end{Rq}

\begin{Rq}[Number of sites]
Theorem \ref{theo: cor} gives the rate of the decay of correlations with respect to the number of particles, but we also have a rate with respect to the number of sites $K$. For instance when $p=1/K$ and if $\eta$ is distributed under the invariant measure, then
\begin{align*}
\vert \cov_{\nu_{N}}(\eta(i)/N,\eta(j)/N)\vert &\sim_{K} \dfrac{1}{K(K-1)N}.
\end{align*}
\end{Rq}

The previous theorem shows that the occupation numbers of two distinct sites become non-correlated when the number of particles increases. In fact, Theorem \ref{theo: cor} leads to a propagation of chaos:

\begin{coro}[Convergence to the QSD]
We have
$$
\E_{\nu_{N}}\left[ d_{\textrm{TV}}(\mu^{N},\pi_{K})\right] \leq \sqrt{\dfrac{K(p+1)}{N}},
$$
where $\pi_{K}$ is the uniform measure on $\{1, \dots, K\}$.
\end{coro}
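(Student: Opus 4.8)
The plan is to bound the expected total variation distance site by site and reduce it to the variance of a single occupation number, which has already been computed in the proof of Theorem~\ref{theo: cor}. First, recall that $\nu_N$ is symmetric in the coordinates $\eta(1),\dots,\eta(K)$ and that $\sum_{i=1}^K\eta(i)=N$; hence all marginals coincide, $\E_{\nu_N}[\eta(i)]=N/K$, so that $\E_{\nu_N}[\eta(i)/N]=1/K=\pi_K(i)$ for every $i$. With the convention used throughout the paper, the total variation distance is $d_{\textrm{TV}}(\mu^N,\pi_K)=\sum_{i=1}^K|\eta(i)/N-1/K|$, a function of the random configuration $\eta$; taking expectations under $\nu_N$ gives
$$
\E_{\nu_N}\!\left[d_{\textrm{TV}}(\mu^N,\pi_K)\right]=\sum_{i=1}^K\E_{\nu_N}\!\left[\Bigl|\frac{\eta(i)}{N}-\frac1K\Bigr|\right].
$$

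Next I would apply the Cauchy--Schwarz (equivalently Jensen) inequality to each term: since $\E_{\nu_N}[\eta(i)/N-1/K]=0$,
$$
\E_{\nu_N}\!\left[\Bigl|\frac{\eta(i)}{N}-\frac1K\Bigr|\right]\le\sqrt{\var_{\nu_N}\!\left(\frac{\eta(i)}{N}\right)}=\frac1N\sqrt{\var_{\nu_N}(\eta(i))}.
$$
The computation carried out in the proof of Theorem~\ref{theo: cor} gives $\var_{\nu_N}(\eta(i))=\dfrac{N(K-1)(Np+N-1)}{K^2(N-1+p)}$. Summing over the $K$ sites then yields
$$
\E_{\nu_N}\!\left[d_{\textrm{TV}}(\mu^N,\pi_K)\right]\le\frac{K}{N}\sqrt{\frac{N(K-1)(Np+N-1)}{K^2(N-1+p)}}=\sqrt{\frac{(K-1)(Np+N-1)}{N(N-1+p)}}.
$$

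It remains to simplify the right-hand side into the announced form. The elementary inequality $Np+N-1\le(p+1)(N+p-1)$, which is equivalent to $0\le p^2$, gives $\frac{Np+N-1}{N-1+p}\le p+1$; combining this with $K-1\le K$ bounds the last expression by $\sqrt{K(p+1)/N}$, which is the claim. There is no genuine obstacle in this argument: the substantive work (the exact value of $\var_{\nu_N}(\eta(i))$) is already available from Theorem~\ref{theo: cor}, and the only point requiring a little care is the choice of the two elementary inequalities used in this last step, so as to pass from the exact variance to a clean bound that is uniform in $N$.
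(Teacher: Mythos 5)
Your proof is correct and follows essentially the same route as the paper: a site-by-site bound via Cauchy--Schwarz reducing to $\var_{\nu_N}(\eta(i)/N)$, whose exact value from the proof of Theorem~\ref{theo: cor} is bounded by $\frac{(K-1)(p+1)}{K^2N}$ (your inequality $Np+N-1\le(p+1)(N-1+p)$ is exactly what the paper uses implicitly), followed by summation over the $K$ sites. You merely make explicit the elementary simplification step that the paper leaves unstated.
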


\begin{proof}[Proof] By the Cauchy-Schwarz inequality, we have
\begin{align*}
\E_{\nu_{N}}\left[\left| \dfrac{\eta(k)}{N} - \dfrac{1}{K}\right| \right]
&\leq \left( \E_{\nu_{N}} \left[\left| \dfrac{\eta(k)}{N} - \dfrac{1}{K}\right|^{2}\right]\right) ^{\frac{1}{2}} = \var_{\nu_{N}}\left(\dfrac{\eta(k)}{N}\right)^{1/2}\leq \sqrt{\dfrac{(K-1)(p+1)}{K^{2}N}}.
\end{align*}
Summing over $\{1, \dots, K\}$ ends the proof.
\end{proof}
Result \cite[Theorem 1.2]{CT15} and its corollaries states that there exist $C,\theta>0$ such that
$$
\E_{\nu_{N}}\left[ d_{\textrm{TV}}(\mu^{N},\pi_{K})\right] \leq \frac{C}{N^\theta}.
$$
All constants are explicit and we have $\theta<1/2$. The last corollary then gives a better bound. To our knowledge, it is the first time that this rate of convergence is obtained for the Fleming-Viot process in discrete space. With spectral arguments, this type of result was obtained for diffusion processes in \cite{MR06}. This bound is achieved because of the absence of bias term. Indeed, 
$$
\forall k \in F^*, \ \E_{\nu_{N}}[\mu^{N}(k)] = \frac{1}{K} = \pi_K(k).
$$ 
The bad term in \cite[Theorem 1.2]{CT15} comes from, with the notations of its proof, the estimation of $|u_k(t)-v_k(t)|$ and Gronwall Lemma. 

\begin{Rq}[Parameters depending on $N$]
\label{rq:dependenceGC}
A nice application of explicit rates of convergence is to consider parameters depending on $N$. For instance, we can now consider that $p=p_N$ depends on $N$, this does not change neither the conditioned semi-goup nor the QSD but this changes the dynamics of our interacting-particle system. The last corollary gives that if $\lim \limits_{N\rightarrow \infty} p_N/N =0$ then the empirical measure converges to the uniform measure.

\end{Rq}

\subsection{Long time behavior and spectral analysis of the generator}
\label{sect:spectre}

In this subsection, we point out the optimality of Theorem \ref{th:tl-general} in this special case. Conditions in Theorem \ref{th:tl-general}, which seems to be a bit strong, are tight in the complete graph dynamics. In that case, $\lambda=\rho=1$ and the bound obtained is optimal in terms of contraction. Moreover, the obtained rate is exactly the spectral gap.

\begin{coro}[Wasserstein contraction]
\label{cor: WassGC}
For any processes $(\eta_{t})_{t>0}$ and $(\eta'_{t})_{t>0}$ generated by \eqref{eq:generator}, and
for any $t \geq 0$, we have
$$
\mathcal{W}_{d} (\mathrm{Law}(\eta_t), \mathrm{Law}(\eta'_t)) \leq e^{- t} \mathcal{W}_{d} (\mathrm{Law}(\eta_0), \mathrm{Law}(\eta'_0)).
$$
In particular, when $(\eta'_{0})$ follows the invariant distribution $\nu_{N}$ associated to \eqref{eq:generator}, we
get for every $t \geq 0$
$$
\mathcal{W}_{d} (\mathrm{Law}(\eta_t), \nu_{N}) \leq e^{- t} \mathcal{W}_{d} (\mathrm{Law}(\eta_0), \nu_{N}).
$$
\end{coro}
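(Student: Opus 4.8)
The plan is to obtain Corollary \ref{cor: WassGC} as a direct application of Theorem \ref{th:tl-general} to the specific rates of the complete graph dynamics. First I would compute the constant $\lambda$ appearing in Theorem \ref{th:tl-general} for the matrix $Q$ given by $Q_{i,j}=1/K$ for $i\neq j$ in $F^*=\{1,\dots,K\}$ and $Q_{i,0}=p$. For two distinct states $i,i'\in F^*$ one has $Q_{i,i'}+Q_{i',i}=2/K$, and for each $j\neq i,i'$ the term $Q_{i,j}\wedge Q_{i',j}=1/K$, of which there are $K-2$; summing gives $2/K+(K-2)/K=1$. Hence $\lambda=1$, independently of $i,i'$. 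Next, since $p_0(i)=Q_{i,0}=p$ is constant in $i$, we have $\sup(p_0)-\inf(p_0)=0$, so $\rho=\lambda-0=1>0$. Plugging $\rho=1$ into the conclusion of Theorem \ref{th:tl-general} yields immediately
\[
\mathcal{W}_{d}(\mathrm{Law}(\eta_t),\mathrm{Law}(\eta'_t))\leq e^{-t}\,\mathcal{W}_{d}(\mathrm{Law}(\eta_0),\mathrm{Law}(\eta'_0)),
\]
which is the first assertion.

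For the second assertion, I would invoke the ``in particular'' part of Theorem \ref{th:tl-general}: since $\rho=1>0$, there exists a unique invariant distribution $\nu_N$ (which we already know explicitly from Subsection \ref{subsec:invariant}), and taking $(\eta'_t)_{t\geq 0}$ to be the stationary process with $\mathrm{Law}(\eta'_0)=\nu_N$ — so that $\mathrm{Law}(\eta'_t)=\nu_N$ for all $t$ — the contraction estimate specializes to
\[
\mathcal{W}_{d}(\mathrm{Law}(\eta_t),\nu_{N})\leq e^{-t}\,\mathcal{W}_{d}(\mathrm{Law}(\eta_0),\nu_{N}).
\]
This completes the proof.

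There is essentially no obstacle here: the corollary is a plug-in of the explicit parameters into an already-proven theorem, and the only thing to verify carefully is the combinatorial evaluation $\lambda=1$, in particular that the infimum over pairs $(i,i')$ is attained with the same value for every pair (which holds by the symmetry of the complete graph). The more substantive content — namely that the rate $e^{-t}$ is \emph{optimal} and coincides with the spectral gap — is not part of this corollary's statement and will be addressed through the spectral analysis carried out later in Subsection \ref{sect:spectre}; for the corollary itself one need only record the contraction bound. If desired, one may add a remark that the same computation shows $\rho=\lambda=1$, so that the bound of Theorem \ref{th:tl-general} loses nothing in this example.
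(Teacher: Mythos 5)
Your proposal is correct and follows exactly the paper's (implicit) argument: the corollary is obtained by computing $\lambda=2/K+(K-2)/K=1$ and noting $p_0\equiv p$ so $\rho=\lambda=1$, then specializing Theorem \ref{th:tl-general}, with the stationary choice $\mathrm{Law}(\eta'_0)=\nu_N$ for the second bound. Nothing is missing.
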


In particular, if $\lambda_1$ is the smallest positive eigenvalue of $-\mathcal{L}$, defined at \eqref{eq:generator}, then we have 
$$1=\rho \leq \lambda_1.$$
Indeed, on the one hand, let us recall that, as the invariant measure is reversible, $\lambda_1$ is the largest constant such that
\begin{equation}
\label{eq:var-zero}
\lim_{t\rightarrow + \infty} e^{2 \lambda t} \Vert R_t f - \nu_{N}(f) \Vert_{L^{2}(\nu_{N})}^{2} =0,
\end{equation}
for every $\lambda < \lambda_1$ and $f\in L^{2}(\nu_{N})$, where $(R_t)_{t\geq 0}$ is the semi-group generated by $\mathcal{L}$. See for instance \cite{B94,S97}. On the other hand, if $\lambda < 1$ then, by Theorem \ref{th:tl-general}, we have
\begin{align*}
e^{2 \lambda t} \Vert R_t f - \nu_{N}(f) \Vert_{L^{2}(\nu_{N})}^{2}
&=e^{2 \lambda t} \int_E \left( (\delta_\eta R_t) f - (\nu_{N} R_t) f\right)^2 \nu_{N}(d\eta)\\
&\leq 2 e^{2 \lambda t} \Vert f \Vert_\infty^2 \int_E  \mathcal{W}_{d} (\delta_\eta R_t, \nu_{N} R_t)^2 \nu_{N}(d\eta)\\
&\leq 2 e^{2 (\lambda-1) t} \Vert f \Vert_\infty^2 \int_E \mathcal{W}_{d} (\delta_\eta, \nu_{N})^2 \nu_{N}(d\eta),
\end{align*}
and then \eqref{eq:var-zero} holds. Now, the constant functions are trivially eigenvectors of $\mathcal{L}$ associated with the eigenvalue $0$, and if, for $k \in \{1,\dots,K\}$, $l \geq 1$ we set $\varphi^{(l)}_{k}:\eta\mapsto \eta(k)^{l}$ then the function $\varphi^{(1)}_k$ satisfies 
$$
\mathcal{L}\varphi^{(1)}_k= N/K - \varphi^{(1)}_k.
$$
In particular $\varphi^{(1)}_k - N/K$ is an eigenvector and $1$ is an eigenvalue of $-\mathcal{L}$. This gives $\lambda_1 \leq 1$ and finally $\lambda_1 =1$ is the smallest eigenvalue of $-\mathcal{L}$. By the reversibility, we have a Poincar\'{e} (or spectral gap) inequality
$$
\forall t \geq 0, \ \Vert R_t f - \nu_{N}(f) \Vert_{L^{2}(\nu_N)}^{2} \leq e^{-2 t} \Vert f - \nu_{N}(f) \Vert_{L^{2}(\nu_{N})}^{2}.
$$

\begin{Rq}[Complete graph random walk]
If $(a_i)_{1 \leq i \leq K}$ is a sequence such that $\displaystyle \sum_{i=1}^{K} a_{i}=0$ then the function $\displaystyle \sum_{i=1}^{K} \varphi^{(1)}_i$ is an eigenvector of $\mathcal{L}$. However, if $L$ is the generator of the classical complete graph random walk, $La = - a$ and then $a$ is also an eigenvector of $L$ with the same eigenvalue.
\end{Rq}

Let us finally give the following result on the spectrum of $\mathcal{L}$:

\begin{lem}[Spectrum of $-\mathcal{L}$]
The spectrum of $-\mathcal{L}$ is included in
$$
\left\{ \sum_{i=1}^{K} \lambda_{l_{i}} \ | \ l_1, \dots, l_K \in \{0, \dots, N\} \right\},
$$
where 
$$
\forall l \in \{0, \dots, N\}, \ \lambda_{l} = l + \dfrac{l(l-1)p}{N-1}.
$$
\end{lem}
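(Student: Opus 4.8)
The plan is to construct a filtration of $\R^E$ that is stable under $\mathcal{L}$ and on whose graded pieces $-\mathcal{L}$ acts as a scalar multiple of the identity, and then to read off the block‑triangular form. This will in fact give the sharper statement $\mathrm{Spec}(-\mathcal{L})\subseteq\{\lambda_0,\dots,\lambda_N\}$, which is contained in the announced set since $\lambda_n=\lambda_n+(K-1)\lambda_0$ and $\lambda_0=0$.

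For $n\in\{0,\dots,N\}$ let $\mathcal{P}_n\subseteq\R^E$ be the span of the functions $\eta\mapsto\prod_{k=1}^K(\eta(k))_{l_k}$ with $l_1,\dots,l_K\ge 0$ and $\sum_k l_k\le n$, where $(x)_l:=x(x-1)\cdots(x-l+1)$; equivalently, $\mathcal{P}_n$ is the space of restrictions to $E$ of polynomials of degree $\le n$ in $\eta(1),\dots,\eta(K)$. Then $\mathcal{P}_0\subseteq\cdots\subseteq\mathcal{P}_N$, and $\mathcal{P}_N=\R^E$ because $\prod_k\binom{\eta(k)}{m_k}=\mathds{1}_{\{\eta=m\}}$ for every $m\in E$. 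I would pass to the labeled dynamics: let $\xi=(\xi_1,\dots,\xi_N)$ be the exchangeable Markov chain on $\{1,\dots,K\}^N$ with generator $\mathcal{L}^{\mathrm{lab}}=\sum_{a=1}^N L_a+\frac{p}{N-1}\sum_{a\ne b}(\theta_{ab}-I)$, where $L_a$ is the complete‑graph random‑walk generator acting on the $a$‑th coordinate and $\theta_{ab}F(\xi):=F(\xi^{a:=\xi_b})$ overwrites coordinate $a$ by the value of coordinate $b$. By exchangeability the occupation‑number process of this section is a Markov lumping of $\xi$, so $\mathcal{L}$ is the restriction of $\mathcal{L}^{\mathrm{lab}}$ to symmetric functions. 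For a coloring $c\colon\{1,\dots,n\}\to\{1,\dots,K\}$ with $|c^{-1}(k)|=l_k$, put
\[
\Phi_c=\sum_{\phi}g_\phi,\qquad g_\phi:=\prod_{s=1}^n\mathds{1}_{\{\xi_{\phi(s)}=c(s)\}},
\]
the sum over all injections $\phi\colon\{1,\dots,n\}\hookrightarrow\{1,\dots,N\}$; then $\Phi_c=\prod_k(\eta(k))_{l_k}$ (so $\Phi_c$ depends only on the multiset of colors), and the $\Phi_c$ with $\sum_k l_k\le n$ span $\mathcal{P}_n$.

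The argument then rests on two claims, both obtained by feeding $L_a$ and $\theta_{ab}-I$ into the factors $\mathds{1}_{\{\xi_{\phi(s)}=c(s)\}}$. First, $\mathcal{L}^{\mathrm{lab}}$ preserves $\mathcal{P}_n$: a mutation $L_a$ acting on such a factor either keeps it (coefficient $-1$) or turns it into the constant $1/K$, which deletes the $s$‑th ``mark''; and $\theta_{ab}-I$ either does nothing to $\Phi_c$ (if $a\notin\mathrm{im}\,\phi$), relocates a mark while keeping its color (if $a=\phi(s)$, $b\notin\mathrm{im}\,\phi$), or coalesces two marks, respectively annihilates the product (if $a=\phi(s)$, $b=\phi(t)$, $t\ne s$); so the output always involves at most $n$ marks. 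Second, $-\mathcal{L}^{\mathrm{lab}}$ induces $\lambda_n\,\mathrm{Id}$ on $\mathcal{P}_n/\mathcal{P}_{n-1}$. Modulo $\mathcal{P}_{n-1}$ only the ``keep'' branch of the mutation survives, so $\sum_a L_a\Phi_c\equiv -n\Phi_c$; and modulo $\mathcal{P}_{n-1}$ only the ``do nothing'' and ``relocate'' branches of the interaction survive, and each surviving degree‑$n$ term is again of coloring $c$, hence belongs to $\Phi_c$. Counting, for each $\phi$, the $(N-n)(N-1)$ pairs $(a,b)$ that do nothing, and re‑indexing the relocations (for each target injection $\psi$ and each of its $n$ marks there are exactly $N-n$ source pairs producing $g_\psi$), one gets $\sum_{a\ne b}\theta_{ab}\Phi_c\equiv\bigl((N-n)(N-1)+n(N-n)\bigr)\Phi_c\pmod{\mathcal{P}_{n-1}}$, whence
\[
\tfrac{p}{N-1}\sum_{a\ne b}(\theta_{ab}-I)\Phi_c\equiv\tfrac{p}{N-1}\bigl((N-n)(N-1)+n(N-n)-N(N-1)\bigr)\Phi_c=-\tfrac{p\,n(n-1)}{N-1}\,\Phi_c .
\]
Summing the two contributions, $-\mathcal{L}^{\mathrm{lab}}\Phi_c\equiv\bigl(n+\tfrac{p\,n(n-1)}{N-1}\bigr)\Phi_c=\lambda_n\Phi_c\pmod{\mathcal{P}_{n-1}}$, and since the $\Phi_c$ span $\mathcal{P}_n$ modulo $\mathcal{P}_{n-1}$, the second claim follows.

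Granting this, $-\mathcal{L}$ is block upper‑triangular along $0\subseteq\mathcal{P}_0\subseteq\cdots\subseteq\mathcal{P}_N=\R^E$ with $n$‑th diagonal block $\lambda_n\,\mathrm{Id}$, so its characteristic polynomial is $\prod_{n=0}^N(x-\lambda_n)^{\dim(\mathcal{P}_n/\mathcal{P}_{n-1})}$ and all its eigenvalues lie in $\{\lambda_0,\dots,\lambda_N\}$; taking $(l_1,\dots,l_K)=(n,0,\dots,0)$ shows each $\lambda_n$ is of the announced form, which concludes. The step I expect to be the real obstacle is the second claim: in the resampling term one must identify exactly which summands stay at degree $n$ and compute their multiplicities, i.e. check that the cancellation produces precisely $\lambda_n$ — and, relatedly, that $\mathcal{L}$ maps $\mathcal{P}_n$ into $\mathcal{P}_n$ in the first place, which in the occupation variables is the nonobvious vanishing of the a priori degree‑$(n+1)$ contributions of the quadratic rates, and is what makes the labeled reformulation worthwhile.
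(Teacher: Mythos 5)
Your argument is correct, and it takes a genuinely different route from the paper's. The paper stays in the occupation variables: it computes $\mathcal{L}(\eta(k)^{l})=-\lambda_{l}\,\eta(k)^{l}+(\text{lower degree})$ and then argues by recurrence on a multidegree filtration that each product $\prod_{i}\eta(k_{i})^{l_{i}}$ can be corrected by lower-order terms into an eigenvector with eigenvalue $\sum_{i}\lambda_{l_{i}}$, concluding because every function on the finite set $E$ is polynomial. You instead lift to the labeled $N$-particle system, write the falling-factorial products $\prod_{k}(\eta(k))_{l_k}$ as sums of marked indicators over injections, and show that the total-degree filtration $\mathcal{P}_{0}\subseteq\cdots\subseteq\mathcal{P}_{N}=\R^{E}$ is stable with $-\mathcal{L}$ inducing $\lambda_{n}\,\mathrm{Id}$ on $\mathcal{P}_{n}/\mathcal{P}_{n-1}$; I checked your counts ($(N-n)(N-1)$ inert pairs, $n(N-n)$ relocations per target injection, coalescence/annihilation dropping the degree, and the mutation term $-n$), and they are right, giving $-\mathcal{L}\Phi_{c}\equiv\lambda_{n}\Phi_{c}$ modulo $\mathcal{P}_{n-1}$ and hence the block-triangular structure; the labeled picture also makes transparent the cancellation of the a priori degree-$(n+1)$ terms produced by the quadratic rates, which is the delicate point in occupation coordinates. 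What your route buys is a strictly sharper localization, $\mathrm{Spec}(-\mathcal{L})\subseteq\{\lambda_{0},\dots,\lambda_{N}\}$ with multiplicities read off from the graded dimensions, which of course implies the stated inclusion since $\lambda_{0}=0$. It also quietly corrects the paper's cross-site step: for $k\neq l$ the paper's own computation in the proof of Theorem \ref{th:cor-GC} shows that the leading coefficient of $-\mathcal{L}(\eta(k)\eta(l))$ is $2+\frac{2p}{N-1}=\lambda_{2}$, not $\lambda_{1}+\lambda_{1}=2$, so mixed monomials do not yield eigenvalues $\sum_{i}\lambda_{l_{i}}$ as literally claimed there (and indeed, for $K=3$, $N=2$, $p=1$ the spectrum is $\{0,1,4\}$ with multiplicities $1,2,3$, so $2$ is not an eigenvalue); the lemma itself is unaffected because the announced set contains $\{\lambda_{0},\dots,\lambda_{N}\}$. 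The paper's approach is shorter and avoids the labeled lift, but yours is the one that pins the spectrum down exactly.
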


\begin{proof}[Proof]

For  every $ k \in \{1, \dots, K\}$ and $l\in \{0, \dots, N\}$, we have
\begin{align*}
\mathcal{L}\varphi^{(l)}_{k}(\eta) 
&= - \lambda_{l} \varphi_{k}^{(l)}(\eta) + Q_{l-1}(\eta),
\end{align*}
where $Q_{l-1}$ is a polynomial whose degree is less than $l-1$. A straightforward recurrence shows that whether there exists or not a polynomial function $\psi^{(l)}_{k}$, whose degree is $l$, satisfying $\mathcal{L}\psi^{(l)}_{k} = - \lambda_l \psi^{(l)}_{k}$ (namely $\psi^{(l)}_{k}$ is an eigenvector of $\mathcal{L}$). Indeed, it is possible to have $\psi^{(l)}_{k} = 0$ since the polynomial functions are not linearly independent ($F$ is finite). More generally, for all  $l_1, \dots, l_K \in \{1, \dots, N\}$, there exists a polynomial $Q$ with $K$ variables, whose degree with respect to the $i^{\text{th}}$ variable is strictly less than $l_i$, such that the function $\phi: \eta \mapsto \displaystyle \prod_{i=1}^{K} \eta(k_{i})^{l_{i}} + Q(\eta)$ satisfies 
$$
\mathcal{L}\phi = - \lambda \phi \ \text{where} \ \lambda  = \sum_{i=1}^{K} \lambda_{l_{i}}.
$$ 
Again, provided that $\phi \neq 0$, $\phi$ is an eigenvector and $\lambda$ an eigenvalue of $-\mathcal{L}$. Finally, as the state space is finite, using multivariate Lagrange polynomial, we can prove that every function is polynomial and thus we capture all the eigenvalues.
\end{proof}

\begin{Rq}[Cardinal of $E$]
As $\textrm{card}(F^*)=K$, we have 
$$
\textrm{card}(E) = \binom{N+K-1}{K-1} = \frac{(N+K-1)!}{N!(K-1)!}.
$$
In particular, the number of eigenvalues is finite and less than $\textrm{card}(E)$.
\end{Rq}

\begin{Rq}[Marginals]
For each $k$, the random process $(\eta_t(k))_{t\geq0}$, which is a marginal of a process generated by \eqref{eq:generator}, is a Markov process on $\N_{\textrm{N}}=\{0,\dots, N\}$ generated by 
\begin{align*}
\mathcal{G} f(x) 
&= (N-x) \left(\frac{1}{K}+\dfrac{px}{N-1}\right) (f(x+1) - f(x))\\
&\quad + x \left( \frac{K-1}{K} + \frac{p(N- x)}{N-1}\right)(f(x-1) - f(x)),
\end{align*}
for every function $f$ on $\N_{\textrm{N}}$ and $x\in \N_{\textrm{N}}$. We can express the spectrum of this generator. Indeed, let $\varphi_l: x \mapsto x^l$, for every $l\geq 0$. The family $(\varphi_l)_{0 \leq l\leq N}$ is linearly independent as can be checked with a Vandermonde determinant. This family generates the $L^2-$space  associated to the invariant measure since this space has a dimension equal to $N+1$. Now, similarly to the proof of the previous lemma, we can prove the existence of $N+1$ polynomials, which are eigenvectors and linearly independent, whose eigenvalues are $\lambda_0, \lambda_1, \dots, \lambda_N$.
\end{Rq}

\section{The two point space}
\label{sect:two}
In all this section we denote by $p_0$ the function $i \in F^{*} \mapsto Q_{i,0}$.\\
We consider a Markov chain defined on the states $\left\lbrace 0, 1, 2\right\rbrace$ where $0$ is the absorbing state. Its infinitesimal generator $G$ is defined by
$$
G=\begin{bmatrix}
   0 & 0 & 0 \\
   p_0(1) & -a - p_0(1) & a\\
   p_0(2) & b & -b - p_0(b), \\
\end{bmatrix}
$$ 
where $a,b >0$, $p_0(1),p_0(2)\geq 0$ and $p_0(1)+p_0(2)>0$. The generator of the Fleming-Viot process with $N$ particles applied to bounded functions $f: E\rightarrow \mathbb{R}$ reads 
\begin{align}
\label{eq:generator2point} 
\mathcal{L}f(\eta) 
&= \eta(1) \left( a + p_{0}(1) \dfrac{\eta(2)}{N-1} \right)  (f(T_{1 \rightarrow 2} \eta)-f(\eta))\nonumber \\
&+ \eta(2) \left( b + p_{0}(2) \dfrac{\eta(1)}{N-1} \right)  (f(T_{2 \rightarrow 1} \eta)-f(\eta)).
\end{align}

\subsection{The associated killed process}

The long time behavior of the conditionned process is related to the eigenvalues and eigenvectors of the matrix:
$$
M=
\begin{bmatrix}
   -a-p_{0}(1) & a \\
   b & -b-p_{0}(2) 
\end{bmatrix}.
$$
Indeed see \cite[section 3.1]{MV12}. Its eigenvalues are given by      
\begin{equation*}
\lambda_{+} = \dfrac{-(a+b+p_{0}(1)+p_{0}(2)) +\sqrt{(a-b+p_{0}(1)- p_{0}(2))^{2}+4ab}}{2},
\end{equation*}

\begin{equation*}
\lambda_{-} = \dfrac{-(a+b+p_{0}(1)+p_{0}(2)) -\sqrt{(a-b+p_{0}(1)- p_{0}(2))^{2}+4ab}}{2},
\end{equation*}

and the corresponding eigenvectors are respectively given by
$$  v_{+} = \left (
   \begin{array}{c}
      a \\
      -A+\sqrt{A^{2}+4ab}\\
   \end{array}
   \right ) \ \text{ and } \ v_{-} = \left (
                                        \begin{array}{c}
                                          a \\
                                          -A-\sqrt{A^{2}+4ab}\\
                                        \end{array}
                                        \right ),$$
   
where $A= a-b+p_{0}(1)- p_{0}(2)$. Also set $\nu= v_+/(v_+(1) + v_+(2))$. From these properties, we deduce that

\begin{lem}[Convergence to the QSD]
\label{lem:cvQSD2pt}
There exists a constant $C>0$ such that for every initial distribution $\mu$, we have
$$
\forall t\geq 0, \ d_{\text{\textrm{TV}}}(\mu T_t, \nu) \leq C e^{-(\lambda_+ - \lambda_-) t}.
$$
\end{lem}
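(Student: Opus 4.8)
The plan is to unfold the definition of $\mu T_t$ as a renormalised matrix exponential of the $2\times2$ sub-generator $M$ and to diagonalise it. Since $\mu T_t$ depends on $\mu$ only through its restriction to $F^*=\{1,2\}$ (then renormalised), I may assume that $\mu=(\mu(1),\mu(2))$ is a probability vector on $\{1,2\}$; then, writing $\un=(1,1)^{\trans}$,
$$
\mu T_t=\frac{\mu\, e^{tM}}{\mu\, e^{tM}\,\un},\qquad t\geq 0,
$$
the denominator $\mu\,e^{tM}\un=\mathbb{P}_\mu(X_t\neq 0)$ being strictly positive. The discriminant $(a-b+p_0(1)-p_0(2))^2+4ab$ is at least $4ab>0$, so $M$ has two distinct real eigenvalues $\lambda_-<\lambda_+$; moreover $M+cI$ is a nonnegative irreducible matrix for $c$ large (this uses $a,b>0$), so by Perron--Frobenius $\lambda_+$ is simple and dominant, with strictly positive left eigenvector $v_+$ (so that $\nu=v_+/(v_+(1)+v_+(2))$ is indeed a probability measure) and strictly positive right eigenvector, which I denote by $u_+$.

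The main step is the spectral decomposition $e^{tM}=e^{\lambda_+ t}\Pi_++e^{\lambda_- t}\Pi_-$, with $\Pi_\pm$ the spectral projectors ($\Pi_+^2=\Pi_+$, $\Pi_+\Pi_-=0$, $\Pi_++\Pi_-=I$). Since $\lambda_+$ is simple, $\mu\Pi_+$ is proportional to $v_+$: writing $\Pi_+=u_+v_+/(v_+u_+)$ gives $\mu\Pi_+=\alpha_\mu v_+$ with $\alpha_\mu=(\mu u_+)/(v_+u_+)$, and because $u_+$ has strictly positive coordinates while $\mu$ ranges over the simplex of $\{1,2\}$, the scalar $\alpha_\mu$ stays between two positive constants not depending on $\mu$. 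Using $\mu\Pi_-=\mu-\alpha_\mu v_+$, dividing numerator and denominator of $\mu T_t$ by $e^{\lambda_+ t}$, and setting $\varepsilon_t=e^{-(\lambda_+-\lambda_-)t}\in(0,1]$ and $\gamma_\mu=\alpha_\mu\,(v_+(1)+v_+(2))$, the identity $\alpha_\mu v_+=\gamma_\mu\nu$ makes everything collapse to
$$
\mu T_t=\frac{\gamma_\mu(1-\varepsilon_t)\,\nu+\varepsilon_t\,\mu}{\gamma_\mu(1-\varepsilon_t)+\varepsilon_t}=(1-w_t^\mu)\,\nu+w_t^\mu\,\mu,\qquad w_t^\mu=\frac{\varepsilon_t}{\gamma_\mu(1-\varepsilon_t)+\varepsilon_t}\in[0,1],
$$
i.e. $\mu T_t$ is a convex combination of $\nu$ and $\mu$, and in particular $\mu T_t-\nu=w_t^\mu(\mu-\nu)$.

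It remains to bound $w_t^\mu$ uniformly in $\mu$ and $t$. The denominator $\gamma_\mu(1-\varepsilon_t)+\varepsilon_t=\gamma_\mu+\varepsilon_t(1-\gamma_\mu)$ is affine in $\varepsilon_t$, interpolating between $\gamma_\mu$ (at $\varepsilon_t=0$) and $1$ (at $\varepsilon_t=1$), hence bounded below on $[0,1]$ by $\min(\gamma_\mu,1)$, which is at least a constant $1/C>0$ depending only on $(a,b,p_0(1),p_0(2))$ (one may take $1/C=\min(1,\inf_\mu\gamma_\mu)$). Therefore $w_t^\mu\leq C\varepsilon_t$ and, since $d_{\textrm{TV}}$ of two probabilities on a two-point set is just the absolute difference of one coordinate,
$$
d_{\textrm{TV}}(\mu T_t,\nu)=w_t^\mu\,d_{\textrm{TV}}(\mu,\nu)\leq w_t^\mu\leq C\,e^{-(\lambda_+-\lambda_-)t},
$$
which is the claim, with an explicit constant. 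The only genuinely delicate point is this uniformity of $C$ over all initial laws $\mu$: it rests precisely on the strict positivity of the Perron eigenvectors, which keeps $\gamma_\mu$ away from $0$; everything else is elementary $2\times2$ linear algebra. Alternatively, one could quote \cite[section 3.1]{MV12} for the long-time behaviour of the conditioned semigroup and merely verify the exponential rate, but the computation above is shorter and also produces the constant.
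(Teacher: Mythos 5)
Your proof is correct, but it takes a genuinely different route from the paper: the paper's entire proof of this lemma is a citation to M\'el\'eard and Villemonais (\cite{MV12}, Theorem 7 and Remark 3), whereas you diagonalise the $2\times 2$ sub-generator $M$ by hand. Your steps check out: $\mu T_t=\mu e^{tM}/\mu e^{tM}\un$ for any initial law charging $F^*$ (the only restriction worth stating explicitly, since $\mu T_t$ is undefined for $\mu=\delta_0$); the discriminant $(a-b+p_0(1)-p_0(2))^2+4ab\geq 4ab>0$ gives two simple real eigenvalues; Perron--Frobenius applied to $M+cI$ gives strictly positive left and right eigenvectors for $\lambda_+$; and the convex-combination identity $\mu T_t=(1-w_t^\mu)\nu+w_t^\mu\,\mu$ with $w_t^\mu\leq C e^{-(\lambda_+-\lambda_-)t}$, uniformly in $\mu$ because $\gamma_\mu$ is bounded away from $0$ by the positivity of $u_+$, yields the claim. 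What your computation buys over the citation is explicitness: a concrete constant $C$, the exact rate $\lambda_+-\lambda_-$, and the pleasant structural fact that $\mu T_t$ is literally a convex combination of $\mu$ and $\nu$ (which also re-proves $\nu T_t=\nu$); what the citation buys is brevity and a framework that works beyond the two-point case. One caution on matching notation: your $\nu$ is intrinsically defined as the normalised left (row) Perron eigenvector of $M$, which is indeed the QSD, and this is the right reading of the lemma; note, however, that the vector $v_+=(a,\,-A+\sqrt{A^2+4ab})$ displayed in the paper does not solve $vM=\lambda_+ v$ (nor $Mv=\lambda_+ v$) in general --- in the symmetric case $a=b$, $p_0(1)=p_0(2)$ it gives $(a,2a)$, whereas the QSD must be uniform --- the correct left eigenvector being proportional to $\bigl(-A+\sqrt{A^2+4ab},\,2a\bigr)$, so the paper's displayed $v_+$ apparently contains a typo (components swapped), and your argument, which never uses that explicit formula, is unaffected by it.
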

\begin{proof}[Proof]
See \cite[Theorem 7]{MV12} and \cite[Remark 3]{MV12}.
\end{proof}
Note that
$$
\lambda_+ - \lambda_-
=\sqrt{(a+b)^2 +2 (a-b)(p_{0}(1)- p_{0}(2))+(p_{0}(1)- p_{0}(2))^{2}}$$
and $\rho= a+b - (\sup(p_0)-\inf(p_0))$ defined in Theorem \ref{th:tl-general}. We have then $\lambda_+ - \lambda_-> \rho$ when $\sup(p_0)>\inf(p_0)$. In particular Theorem \ref{th:tl-general} seems not optimal.

\subsection{Explicit formula of the invariant distribution}
\label{sect:two-inv} 
Firstly note that, as
$$
\forall \eta \in E, \eta(1) + \eta(2) =N,
$$
each marginal of $(\eta_t)_{t\geq 0}$ is a Markov process:

\begin{lem}[Markovian marginals]
The random process $(\eta_t(1))_{t\geq0}$, which is a marginal of a process generated by \eqref{eq:generator2point}, is a Markov process generated by  $\mathcal{G}$ defined by
\begin{align}
\mathcal{G} f(n) 
&= b_n (f(n+1) - f(n)) + d_n (f(n-1) - f(n)), \label{eq:gen-G}
\end{align}
for any function $f$ and $n \in \N_{\mathrm{N}}=\{0, \dots, N\}$, where
$$
b_n=(N-n) \left( b + p_{0}(2) \dfrac{n}{N-1} \right)\text{ and } d_n = n \left( a + p_{0}(1) \dfrac{N-n}{N-1} \right).
$$
\end{lem}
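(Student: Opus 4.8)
The plan is to verify directly that the marginal process $(\eta_t(1))_{t\ge 0}$ is a Markov process by computing the action of the full generator $\mathcal{L}$ from \eqref{eq:generator2point} on functions that depend only on $\eta(1)$, and checking that the result again depends only on $\eta(1)$. Concretely, for a function $g$ on $\N_{\mathrm N}$, set $f(\eta) = g(\eta(1))$; then $f$ is a bounded function on $E$ and $\mathcal{L}f$ is well-defined. The key structural observation is that, because $\eta(1)+\eta(2)=N$ on $E$, the configuration $\eta$ is entirely determined by $n:=\eta(1)$, so $\eta(2)=N-n$, $T_{1\to2}\eta$ corresponds to $n\mapsto n-1$, and $T_{2\to1}\eta$ corresponds to $n\mapsto n+1$. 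Substituting into \eqref{eq:generator2point} gives
$$
\mathcal{L}f(\eta) = (N-n)\Bigl(b + p_0(2)\tfrac{n}{N-1}\Bigr)\bigl(g(n+1)-g(n)\bigr) + n\Bigl(a + p_0(1)\tfrac{N-n}{N-1}\Bigr)\bigl(g(n-1)-g(n)\bigr),
$$
which is exactly $(\mathcal{G}g)(n)$ with the stated $b_n,d_n$. Note the swap: the $T_{1\to2}$ term (a particle leaving site $1$, so $n$ decreases) is weighted by $\eta(1)(a+p_0(1)\eta(2)/(N-1))$, which becomes the \emph{down} rate $d_n = n(a+p_0(1)(N-n)/(N-1))$; and the $T_{2\to1}$ term gives the \emph{up} rate $b_n=(N-n)(b+p_0(2)n/(N-1))$.

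From this identity, the Markov property of the marginal follows by a standard projection/lumping argument: since for every $g$ the function $\mathcal{L}(g\circ\pi)$ (where $\pi(\eta)=\eta(1)$) factors through $\pi$, i.e. equals $(\mathcal{G}g)\circ\pi$, the $\sigma$-algebra generated by $\pi$ is invariant under the semigroup, and $(\pi(\eta_t))_{t\ge0} = (\eta_t(1))_{t\ge0}$ is itself a Markov process with generator $\mathcal{G}$ on $\N_{\mathrm N}$. One should check that $\mathcal{G}$ as defined is a genuine birth–death generator on $\{0,\dots,N\}$: indeed $d_0 = 0$ and $b_N = 0$, so the chain stays in $\N_{\mathrm N}$, and all rates $b_n, d_n$ are non-negative since $a,b>0$ and $p_0(1),p_0(2)\ge0$.

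The only genuine computation is the substitution into \eqref{eq:generator2point}, which is immediate once one records that $\eta$ is parametrized by $n$; there is no real obstacle here, the content being essentially the observation that the two-point constraint collapses the configuration space $E$ to $\{0,\dots,N\}$. The one point that deserves a sentence of justification is the passage from ``$\mathcal{L}$ maps $\pi$-measurable functions to $\pi$-measurable functions'' to ``$(\eta_t(1))_t$ is Markov''; this is the classical Dynkin criterion for lumpability of a Markov process and can be invoked directly (or proved in one line via the martingale problem: for every $g$, $g(\eta_t(1)) - \int_0^t (\mathcal{G}g)(\eta_s(1))\,ds$ is a martingale with respect to the full filtration, hence also with respect to the filtration generated by $(\eta_s(1))_{s\le t}$).
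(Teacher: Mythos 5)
Your proposal is correct and follows essentially the same route as the paper, whose proof consists precisely of the observation that $\eta=(\eta(1),N-\eta(1))$, so the marginal determines the full configuration and the generator is obtained by direct substitution into \eqref{eq:generator2point}, exactly as you do (with the correct identification of $T_{1\to2}$ with the down rate $d_n$ and $T_{2\to1}$ with the up rate $b_n$). The only difference is cosmetic: since $\eta\mapsto\eta(1)$ is a bijection of $E$ onto $\{0,\dots,N\}$, the lumpability/Dynkin criterion you invoke is not really needed --- the marginal is the full process viewed through a bijection, so the Markov property is immediate.
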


\begin{proof}[Proof]
For every $\eta\in E$, we have $\eta=(\eta(1), N-\eta(1))$ thus the Markov property and the generator are easily deducible from the properties of $(\eta_t)_{t \geq 0}$. 
\end{proof}

From this result and the already known results on birth and death processes \cite{CJ12, C04}, we deduce that $(\eta_t(1))_{t\geq0}$ admits an invariant and reversible distribution $\pi$ given by
$$
\pi(n) = u_0 \prod_{k=1}^n \frac{b_{k-1}}{d_k} \ \text{ and } \ u_0^{-1}= 1 + \sum_{k=1}^N \frac{b_0 \cdots b_{k-1}}{d_{1} \cdots d_{k} },
$$ 
for every $n\in \N_{\mathrm{N}}$. This gives
$$
\pi(n) = u_0 \binom{N}{n} \prod_{k=1}^n \frac{b(N-1) + (k-1) p_0(2)}{ a(N-1) + (N-k) p_0(1)},
$$
and 
$$
u_0^{-1} = 1 + \prod_{k=1}^N \frac{b(N-1) + k p_0(2)}{ a(N-1) + k p_0(1)}.
$$
Similarly, as $\eta_t(2) = N-\eta_t(1)$, the process $(\eta_t(2))_{t\geq 0}$ is a Markov process whose invariant distribution is also easily calculable. The invariant law of $(\eta_t)_{t\geq 0}$, is then given by
$$
\nu_N((r_1,r_2))=\pi\left(\{ r_1\}\right),\quad \forall (r_1,r_2) \in E.$$
Note that if $p_0$ is not constant then we can not find a basis of orthogonal polynomials in the $L^2$ space associated to $\nu_N$. It is then very difficult to express the spectral gap or the decay rate of the correlations.

\subsection{Rate of convergence}

Applying Theorem \ref{th:tl-general}, in this special case, we find:

\begin{coro}[Wasserstein contraction]
For any processes $(\eta_{t})_{t>0}$ and $(\eta'_{t})_{t>0}$ generated by \eqref{eq:generator2point}, and
for any $t \geq 0$, we have
\begin{align*}
\mathcal{W}_{d} (\mathrm{Law}(\eta_t), \mathrm{Law}(\eta'_t)) \leq e^{-\rho t} \mathcal{W}_{d} (\mathrm{Law}(\eta_0), \mathrm{Law}(\eta'_0)),
\end{align*}
where $\rho = a+b-(\sup(p_0)-\inf(p_0)).$ In particular, when $(\eta'_{0})$ follows the invariant distribution $\nu_N$ of \eqref{eq:generator2point}, we
get for every $t > 0$
$$
\mathcal{W}_{d} (\mathrm{Law}(\eta_t), \nu_N) \leq e^{-\rho t} \mathcal{W}_{d} (\mathrm{Law}(\eta_0), \nu_N).
$$
\end{coro}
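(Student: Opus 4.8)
The plan is to recognize the process \eqref{eq:generator2point} as the Fleming-Viot system attached to the rate matrix $Q$ on $F=\{0,1,2\}$ with $Q_{1,2}=a$, $Q_{2,1}=b$ and $Q_{i,0}=p_0(i)$ for $i\in F^*=\{1,2\}$, and then to read off the constant $\rho$ of Theorem \ref{th:tl-general} in this particular case. Since the statement is, word for word, the specialization of Theorem \ref{th:tl-general}, the whole content lies in checking that the structural constant $\lambda$ collapses to $a+b$.

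First I would compute $\lambda = \inf_{i \neq i' \in F^*}\left(Q_{i,i'} + Q_{i',i} + \sum_{j \neq i,i'} Q_{i,j}\wedge Q_{i',j}\right)$. Here $F^*$ has only two elements, so the infimum is over the single pair $\{1,2\}$, and there is no index $j\in F^*$ with $j\notin\{1,2\}$: the inner sum is empty. Hence $\lambda = Q_{1,2}+Q_{2,1}=a+b$. Moreover, as $p_0$ is a function on the two-point set $\{1,2\}$, one has $\sup(p_0)-\inf(p_0)=|p_0(1)-p_0(2)|$, so that $\rho = \lambda-(\sup(p_0)-\inf(p_0)) = a+b-(\sup(p_0)-\inf(p_0))$, as announced.

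With this value of $\rho$, the first displayed inequality is exactly the conclusion of the first part of Theorem \ref{th:tl-general}. For the second inequality I would first observe that here the invariant distribution $\nu_N$ exists and is unique without any sign condition on $\rho$: the configuration space $E$ is finite --- via $\eta\mapsto\eta(1)$ it is in bijection with $\{0,\dots,N\}$, on which the marginal is the irreducible birth-and-death chain \eqref{eq:gen-G} (irreducible since $a,b>0$) --- so the finite irreducible Markov process $(\eta_t)_{t\ge 0}$ has a unique stationary law $\nu_N$, already identified in Subsection \ref{sect:two-inv}. Taking $\eta'_0\sim\nu_N$ gives $\mathrm{Law}(\eta'_t)=\nu_N$ for every $t$, and substituting this into the contraction estimate produces the stated bound.

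There is no genuine difficulty; the only point needing a little attention is the bookkeeping in the definition of $\lambda$ --- namely noticing that the sum over intermediate states is vacuous on a two-point non-absorbing space, which is precisely what makes $\lambda$ reduce to $a+b$ --- together with the remark that, contrary to the general framework of Theorem \ref{th:tl-general}, existence and uniqueness of $\nu_N$ does not require $\rho>0$ because $E$ is finite.
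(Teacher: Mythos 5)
Your proposal is correct and matches the paper's own route: the corollary is obtained exactly by specializing Theorem \ref{th:tl-general} to the two-point case, where the sum over intermediate states in the definition of $\lambda$ is empty so that $\lambda=a+b$ and $\rho=a+b-(\sup(p_0)-\inf(p_0))$. Your additional remark that existence and uniqueness of $\nu_N$ follows from finiteness and irreducibility (rather than from $\rho>0$) is consistent with what the paper establishes in Subsection \ref{sect:two-inv}.
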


This result is not optimal. Nevertheless, the error does not come from the coupling choice of \cite{CT15} but it comes from how the distance is estimated. Indeed, this coupling induces a coupling between two processes generated by $\mathcal{G}$ defined by \eqref{eq:gen-G}. More precisely, let $\mathbb{L} = \mathbb{L}_Q + \mathbb{L}_{p}$ be the generator of the coupling introduced in the proof of \cite[Theorem 1.1]{CT15} in this special case. We set $\mathbb{G} =\mathbb{G}_Q + \mathbb{G}_{p}$, where for any $n,n'\in \N_N$ and $f$ on $E \times E$,
$$
\mathbb{L}_Q f((n,N-n),(n',N-n'))= \mathbb{G}_Q \varphi_f(n,n'),
$$
$$
\mathbb{L}_p f((n,N-n),(n',N-n'))= \mathbb{G}_p \varphi_f(n,n'),
$$
and $\varphi_f(n,n') =f((n,N-n),(n',N-n'))$. It satisfies, for any function $f$ on $\N_{\mathrm{N}}$ and $n'> n$ two elements of $\N_{\mathrm{N}}$,

\begin{align*}
\mathbb{G}_{Q} f(n,n')
&= na \left( f(n-1,n'-1) - f(n,n') \right)\\
&+ (N-n')b \left( f(n+1,n'+1) - f(n,n') \right)\\
&+ (n'-n)b \left( f(n+1,n') - f(n,n') \right)\\
&+ (n'-n)a \left( f(n,n'-1) - f(n,n') \right),
\end{align*}
and
\begin{align*}
\mathbb{G}_{p} f(n,n')
&= p_0(1) \frac{n (N-n')}{N-1} \left( f(n-1,n'-1) - f(n,n') \right)\\
&+ p_0(2) \frac{n (N-n')}{N-1} \left( f(n+1,n'+1) - f(n,n') \right)\\
&+ p_0(1) \frac{n (n'-n)}{N-1} \left( f(n-1,n') - f(n,n') \right)\\
&+ p_0(2) \frac{(N-n')(n'-n)}{N-1} \left( f(n,n'+1) - f(n,n') \right)\\
&+ p_0(2) \frac{n(n'-n)}{N-1} \left( f(n+1,n') - f(n,n') \right)\\
&+ p_0(1) \frac{(N-n')(n'-n)}{N-1} \left( f(n,n'-1) - f(n,n') \right).
\end{align*}
Now, for any sequence of positive numbers $(u_k)_{k\in \{0, \dots, N-1\}}$, we introduce the distance $\delta_u$ defined by
$$
\delta_u (n,n') = \sum_{k=n}^{n'-1} u_k,
$$
for every $n,n' \in \N_{\mathrm{N}}$ such that $n'>n$. For all $n\in \N_{\mathrm{N}}\backslash \{N\}$, we have $\mathbb{G} \delta_u(n,n+1) \leq - \lambda_u \delta_u(n,n+1)$ where
$$
\lambda_u = \min_{k\in \{0, \dots, N-1\}} \left[ d_{k+1} - d_k \frac{u_{k-1}}{u_k} + b_k - b_{k+1} \frac{u_{k+1}}{u_{k}}\right],
$$ 
and thus, by linearity, $\mathbb{G} \delta_u(n,n') \leq - \lambda_u \delta_u(n,n')$, for every $n,n' \in \N_{\mathrm{N}}$. This implies that for any processes $(X_{t})_{t\geq 0}$ and $(X'_{t})_{t\geq 0}$ generated by $\mathcal{G}$ , and for any $t \geq 0$,
$$
\mathcal{W}_{\delta_u} (\mathrm{Law}(X_t), \mathrm{Law}(X'_t)) \leq e^{-\lambda_u t} \mathcal{W}_{\delta_u} (\mathrm{Law}(X_0), \mathrm{Law}(X'_0)).
$$
Note that, for every $n,n' \in \N_{\mathrm{N}}$, we have
$$
\min(u) d((n,N-n),(n',N-n')) \leq \delta_u(n,n') \leq \max(u) d((n,N-n),(n',N-n')),
$$
and then for any processes $(\eta_{t})_{t\geq 0}$ and $(\eta'_{t})_{t\geq 0}$ generated by \eqref{eq:generator2point}, and for any $t \geq 0$, we have
$$
\mathcal{W}_{d} (\mathrm{Law}(\eta_t), \mathrm{Law}(\eta'_t)) \leq \frac{\max(u)}{\min(u)} e^{-\lambda_u t} \mathcal{W}_{d} (\mathrm{Law}(\eta_0), \mathrm{Law}(\eta'_0)).
$$
Finally, using \cite[Theorem 9.25]{C04}, there exists a positive sequence $v$ such that $\lambda_v = \displaystyle \max_u \lambda_u>0$ is the spectral gap of the birth and death process $(\eta_t(1))_{t\geq 0}$. These parameters depend on $N$ and so we should write the previous inequality as
\begin{equation}
\label{eq:gap-2pts}
\mathcal{W}_{d} (\mathrm{Law}(\eta_t), \mathrm{Law}(\eta'_t)) \leq C(N) e^{-\lambda_N t} \mathcal{W}_{d} (\mathrm{Law}(\eta_0), \mathrm{Law}(\eta'_0)),
\end{equation} 
where $C(N)$ and $\lambda_N$ are two constants depending on $N$. In conclusion, the coupling introduced in Theorem \ref{th:tl-general} gives the optimal rate of convergence but we are not able to express a precise expression of $\lambda_N$ and $C(N)$. Nevertheless, in the section that follows, we will prove that, whatever the value of the parameters, the spectral gap is always bounded from below by a positive constant not depending on $N$.

\subsection{A lower bound for the spectral gap}
In this subsection, we study the evolution of $(\lambda_N)_{N\geq 0}$. Calculating $\lambda_N$ for small value of $N$ (it is the eigenvalue of a small matrix) and some different parameters show that, in general, this sequence is not monotone and seems to converge to $\lambda_+ - \lambda_-$. We are not able to prove this, but as it is trivial that for all $N\geq 0$, $\lambda_N>0$, we can hope that it is bounded from below. The aim of this section is to prove this fact.

Firstly, using similar arguments of subsection \ref{sect:spectre}, we have $\lambda_N \geq \rho$, for every $N\geq 0$. This result does not give us information in the case $\rho \leq 0$. However, we can use Hardy's inequalities \cite[Chapter 6]{Toulouse} and mimic some arguments of \cite{M99} to obtain:

\begin{theo}[A lower bound for the spectral gap]
\label{th:spectral gap2}
If $\rho \leq 0$ then there exists $c>0$ such that  
$$
\forall N \geq 0, \ \lambda_N > c.
$$  
\end{theo}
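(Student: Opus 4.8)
The plan is to bound the spectral gap $\lambda_N$ of the birth-death process $(\eta_t(1))_{t\ge 0}$ on $\N_{\mathrm N}$ from below by a constant independent of $N$, using Hardy-type inequalities for birth-death chains. Recall that $\lambda_N$ is the spectral gap of the generator $\mathcal{G}$ from \eqref{eq:gen-G}, with rates $b_n = (N-n)(b + p_0(2)\tfrac{n}{N-1})$ and $d_n = n(a + p_0(1)\tfrac{N-n}{N-1})$, reversible with respect to $\pi$. The Poincar\'e inequality $\var_\pi(f) \le \lambda_N^{-1}\,\mathcal{E}(f,f)$, with Dirichlet form $\mathcal{E}(f,f) = \sum_{n=0}^{N-1} \pi(n) b_n (f(n+1)-f(n))^2$, is controlled up to universal factors ($4$, as in Miclo's work \cite{M99} and \cite[Chapter 6]{Toulouse}) by the Hardy constant
\[
B_N = \max\Big( \sup_{m} \Big(\sum_{n\ge m}\pi(n)\Big)\Big(\sum_{n<m} \frac{1}{\pi(n)b_n}\Big), \ \sup_{m} \Big(\sum_{n\le m}\pi(n)\Big)\Big(\sum_{n>m} \frac{1}{\pi(n)b_n}\Big)\Big),
\]
the supremum taken around a median of $\pi$. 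So it suffices to show $\sup_N B_N < \infty$.

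The key point is to understand the profile of $\pi$ and of the conductances $\pi(n)b_n = \pi(n+1)d_{n+1}$. From the explicit product formula $\pi(n) = u_0\binom{N}{n}\prod_{k=1}^n \frac{b(N-1)+(k-1)p_0(2)}{a(N-1)+(N-k)p_0(1)}$, one sees that after rescaling $n = \lfloor xN\rfloor$, the ratio $\pi(n+1)/\pi(n) = b_n/d_{n+1}$ converges (as $N\to\infty$) to $\frac{(1-x)(b+p_0(2)x)}{(x)(a+p_0(1)(1-x))}$, a function that is $>1$ for small $x$ and $<1$ for $x$ near $1$, crossing $1$ at a single interior point $x^*$. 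Hence $\pi$ concentrates, on the scale $1/\sqrt N$, around $x^* N$: it has exponentially decaying tails away from $x^*N$ with a rate uniform in $N$ (a large-deviations / Laplace-type estimate on $\log \pi$). This is the heart of the matter. The conductance $\pi(n)b_n$ inherits the same Gaussian-type concentration. The sum $\sum_{n\ge m}\pi(n)$ is $O(1)$ and, for $m$ past the bulk, decays; while $\sum_{n<m}\frac{1}{\pi(n)b_n}$ is dominated by the terms closest to the bulk, where $1/(\pi(n)b_n)$ is smallest, and the geometric decay of $\pi$ on either side makes $\big(\sum_{n\ge m}\pi(n)\big)\big(\sum_{n<m}\frac{1}{\pi(n)b_n}\big)$ uniformly bounded — the product of a tail sum of a concentrated measure with the reciprocal-conductance partial sum telescopes against itself. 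Carrying this out carefully for $m$ both inside and outside the bulk, and splitting the product according to whether $m$ is on the increasing or decreasing side of $\log\pi$, gives $B_N \le C$ for all $N$, hence $\lambda_N \ge 1/(4C) =: c > 0$.

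The main obstacle I expect is making the concentration estimate for $\pi$ quantitative and uniform in $N$ near the endpoints $n=0$ and $n=N$, where the rates $b_n, d_n$ degenerate: there $\pi(n)b_n$ can be small, so $1/(\pi(n)b_n)$ is large, and one must check that the measure $\pi$ has decayed enough by then to compensate. Concretely, near $n=0$ one has $b_n \approx bN$, $d_n \approx an$, so $\pi(n) \approx u_0 (bN/a)^n/n!$ — a Poisson-like profile — and $1/(\pi(n)b_n)$ grows like $n!\,a^n/(b^{n+1}N^{n+1})$, which is summable with a bound independent of $N$ as long as $bN/a$ stays bounded away from the relevant scale; the honest check requires treating the regimes $\rho\le 0$ (i.e. $\sup(p_0)-\inf(p_0) \ge a+b$) where the drift structure is least favorable. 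Once the endpoint regimes and the bulk regime are each handled by an elementary but careful summation, assembling them into a single bound on $B_N$ is routine, and the theorem follows from the Hardy inequality.
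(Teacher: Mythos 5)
Your overall strategy is the same as the paper's (a discrete Hardy/Muckenhoupt bound in the spirit of \cite{M99} and \cite[Chapter 6]{Toulouse}, combined with unimodality and concentration of $\pi$ around its mode), but the criterion you wrote down is not the right one, and the error matters. In the correct Hardy quantities, namely $B_{N,+}(i)=\max_{x>i}\bigl(\sum_{y=i+1}^{x}\frac{1}{\pi(y)d_y}\bigr)\pi([x,N])$ and its analogue $B_{N,-}(i)$ as in \eqref{eq:def-B}, the reciprocal-conductance sum is anchored at the chosen central point $i$ and runs outward only up to $x$, and it is paired with the tail mass beyond that same $x$. In your $B_N$ the sum $\sum_{n<m}1/(\pi(n)b_n)$ reaches the boundary $n=0$, where $1/\pi(n)$ is of order $e^{cN}$ because the normalization $u_0=\pi(0)$ is exponentially small ($\pi$ concentrates near $x^*N$); paired with a tail sum of order one this product blows up, so $\sup_N B_N<\infty$ is false for the functional as you stated it. Two related slips confirm the confusion: a sum of positive terms is dominated by its \emph{largest} terms, i.e.\ those farthest from the bulk, not ``the terms closest to the bulk, where $1/(\pi b)$ is smallest''; and your endpoint estimate $\pi(n)\approx u_0(bN/a)^n/n!$ silently drops $u_0$ when you invert it, which is why the boundary terms wrongly look summable uniformly in $N$.

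The second gap is that the quantitative heart of the argument is only asserted. Even with the correctly anchored $B_{N,\pm}$, uniform boundedness does not follow from ``exponentially decaying tails with a rate uniform in $N$'': within distance $o(N)$ of the mode one has $\pi(n+1)/\pi(n)=b_n/d_{n+1}=1-O(|n-m_N|/N)$, so there is no per-site geometric decay at a rate independent of $N$. What is actually needed (and what the paper proves, using the monotonicity of the ratio from Lemma \ref{lem: decreasing} and the explicit formula for $b_i/d_{i+1}$) is decay of $\pi$ by a fixed factor $e^{-\delta}$ over blocks of length $l_N\sim 2\sqrt N$ on each side of the mode $m_N$, together with $d_y\geq\gamma N$ for $y> m_N$ ($N$ large). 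These give $\sum_{y=m_N+1}^{x}\frac{1}{\pi(y)d_y}\leq \frac{1}{\gamma N}\frac{l_N}{\pi(x)}\frac{1}{1-e^{-\delta}}$ and $\pi([x,N])\leq \frac{l_N\pi(x)}{1-e^{-\delta}}$, so that $B_{N,+}(m_N)=O(l_N^2/N)=O(1)$, and similarly for $B_{N,-}(m_N)$. This $\sqrt N\times\sqrt N$ against $1/N$ balance is precisely the step your proposal dismisses as ``routine'' or ``telescopes against itself''; without it, and with the misstated criterion above, the argument does not go through as written.
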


The rest of this subsection aims to prove this result. Hardy's inequalities are mainly based on the estimation of the quantities $B_{N,+}$ and $B_{N,-}$ defined for every $i\in \N$ by
\begin{equation}
\label{eq:def-B}
 B_{N,+} (i) = \max_{x>i}\left( \sum_{y=i+1}^{x} \dfrac{1}{\pi(y) d_y}\right) \pi([x,N]),
\end{equation}
and
$$
 B_{N,-}(i) = \max_{x<i}\left( \sum_{y=x}^{i-1} \dfrac{1}{\pi(y) b_y}\right) \pi([1,x]).
$$

We recall that $\pi=\pi_N$ is the invariant distribution defined in Subsection \ref{sect:two-inv} and jumps rates $b$ and $d$ also depend on $N$.

 More precisely, \cite[Proposition 3]{M99} shows that if one wants to get a "good" lower bound of the spectral gap, one only needs to guess an "adequate choice" of $i$ and to apply the estimate
$$
\lambda_N \geq \frac{1}{4\max\{B_{N,+}(i) , B_{N,-}(i)\}}.
$$
So, we have to find an upper bound for these two quantities. Before to give it, let us prove that the invariant distribution $\pi$ is unimodal. Indeed, it will help us to choose an appropriate $i$.

\begin{lem}[Unimodality of $\pi$]
\label{lem: decreasing}
The sequence $(\pi(i+1)/\pi(i))_{i\geq 0}$ is decreasing. 
\end{lem}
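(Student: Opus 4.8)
The plan is to recognise the statement as the log-concavity of $\pi$ and to prove it by extracting a convolution structure from the explicit formula of Subsection~\ref{sect:two-inv}.

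\emph{Step 1 (reduction).} Since $\pi$ is reversible for $\mathcal{G}$ defined by \eqref{eq:gen-G}, one has $\pi(i+1)/\pi(i)=b_i/d_{i+1}$ for every $i\in\{0,\dots,N-1\}$, and all the $b_i$ ($i<N$) and $d_{i+1}$ ($i\ge 0$) are positive. Hence the claim is equivalent to asking that $i\mapsto b_i/d_{i+1}$ be non-increasing, i.e., after cross-multiplying, that $b_i\,d_{i+2}\ge b_{i+1}\,d_{i+1}$ for $0\le i\le N-2$; in other words, that $(\pi(n))_{0\le n\le N}$ be a log-concave sequence. I would work with this reformulation.

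\emph{Step 2 (a convolution identity).} Put $\alpha=a(N-1)$ and $\beta=b(N-1)$, so that
$$
\pi(n)=u_0\binom{N}{n}\prod_{k=1}^n\frac{\beta+(k-1)p_0(2)}{\alpha+(N-k)p_0(1)}.
$$
Reindexing the denominator product by $j=N-k$, factoring out the $n$-independent quantity $\prod_{j=0}^{N-1}\bigl(\alpha+j\,p_0(1)\bigr)$, and writing $\binom{N}{n}=N!/(n!\,(N-n)!)$, one obtains $\pi(n)=C\,g(n)\,h(N-n)$ for all $n$, with $C>0$ an $n$-independent constant and
$$
g(m)=\frac1{m!}\prod_{j=0}^{m-1}\bigl(\beta+j\,p_0(2)\bigr),\qquad h(m)=\frac1{m!}\prod_{j=0}^{m-1}\bigl(\alpha+j\,p_0(1)\bigr),\qquad m\ge 0,
$$
empty products being equal to $1$. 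Thus, up to normalisation, $\pi$ is the mixed weight $n\mapsto g(n)\,h(N-n)$ built from the two rising-factorial sequences $g$ and $h$.

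\emph{Step 3 (log-concavity of the factors, and the obstacle).} A termwise product of log-concave sequences is log-concave, and $n\mapsto h(N-n)$ is log-concave exactly when $h$ is; hence it suffices to show that $g$ and $h$ are each log-concave. For $g$ one has $g(m)/g(m-1)=(\beta+(m-1)p_0(2))/m$, so $g(m)^2\ge g(m-1)g(m+1)$ reduces, after clearing denominators, to $\beta\ge p_0(2)$; symmetrically $h$ is log-concave as soon as $\alpha\ge p_0(1)$. The pair of conditions $b(N-1)\ge p_0(2)$ and $a(N-1)\ge p_0(1)$ is where the real content lies: they hold for all $N$ large enough, which is the only range relevant to Theorem~\ref{th:spectral gap2}, the finitely many smaller values of $N$ being covered by the trivial bound $\lambda_N>0$. (Equivalently, one can bypass Step~2 and verify that the continuous interpolation $x\mapsto\frac{(N-x)(\beta+p_0(2)x)}{(x+1)(\alpha+p_0(1)(N-1-x))}$ has non-positive logarithmic derivative on $[0,N-1]$; writing $y=N-1-x$, this amounts to the elementary inequality $\frac{\beta-p_0(2)}{(x+1)(\beta+p_0(2)x)}+\frac{\alpha-p_0(1)}{(y+1)(\alpha+p_0(1)y)}\ge 0$, again immediate under the same hypotheses.) The main obstacle is therefore this last point: the decomposition of Step~2 splits $\pi$ into two pieces, each of which ceases to be log-concave once a killing rate $p_0(\cdot)$ exceeds $(N-1)$ times the corresponding internal rate, so the strong-killing regime requires the most care and is handled most cleanly by the reduction to large $N$.
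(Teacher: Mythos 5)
Your argument is correct in the regime it claims, and it follows a genuinely different route from the paper's. The paper works directly with the ratio $\pi(i+1)/\pi(i)=b_i/d_{i+1}$, computes the difference $g(i+1)-g(i)$ of consecutive ratios, and rearranges its numerator into a quantity $\Lambda_N(i)$ that it declares non-positive; you instead factor $\pi(n)=C\,g(n)\,h(N-n)$ into two rising-factorial sequences and invoke the stability of log-concavity under termwise products, which is shorter and makes the mechanism transparent. What your route makes explicit, and the paper's proof leaves silent, is that the conclusion is not unconditional: log-concavity of your two factors requires $b(N-1)\ge p_0(2)$ and $a(N-1)\ge p_0(1)$, and the paper's own final expression for $\Lambda_N(i)$ (a sum of two terms built from $b(N-1)-p_0(2)$ and $a(N-1)-p_0(1)$) is manifestly non-positive only under these same conditions. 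The restriction is genuine rather than an artifact of your decomposition: for $N=2$, $a=b=1$, $p_0(1)=5$, $p_0(2)=0$ (a case with $\rho\le 0$) one finds
$$
\frac{\pi(1)}{\pi(0)}=\frac{2b}{a+p_0(1)}=\frac13<\frac12=\frac{b+p_0(2)}{2a}=\frac{\pi(2)}{\pi(1)},
$$
so the ratio sequence increases and Lemma \ref{lem: decreasing} as stated fails (correspondingly $\Lambda_2(0)=2>0$ in the paper's formula). Your fallback — the conditions hold for all $N$ large enough for fixed $a,b,p_0$, which is the only range actually used in the proof of Theorem \ref{th:spectral gap2}, the finitely many remaining values of $N$ being absorbed by the trivial bound $\lambda_N>0$ — is exactly the right repair, so flagging the strong-killing obstacle is a feature of your proof, not a gap; the same caveat should in fairness be attached to the paper's version of the lemma.
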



\begin{proof}[Proof of Lemma \ref{lem: decreasing}]
For all $i \in \{1, \dots, N\}$, we set 
$$
g(i)= \dfrac{\pi(i+1)}{\pi(i)}=\dfrac{(N-i)(b(N-1)+ip_0(2))}{(i+1)((a+ p_0(1))(N-1)-ip_0(1))}.
$$
It follows that 

\begin{align*}
g(i+1)-g(i)  
&= \dfrac{\Lambda_N(i)}{(i+1)((a+ p_0(1))(N-1)-i p_0(1))(i+2)((a+ p_0(1))(N-1)-(i+1) p_0(1))}
\end{align*}

where 
\begin{align*}
\Lambda_N(i)
&= (N-i-1)(b(N-1)+(i+1)p_0(2))(i+1)((a+ p_0(1))(N-1)-i p_0(1)) \\
&\quad \ - (N-i)(b(N-1)+ip_0(2))(i+2)((a+ p_0(1))(N-1)-(i+1) p_0(1))\\
&= - \left[ b(N-1)- p_0(2) \right] \left[ (N+1) \left(a(N-1)- p_0(1) \right) + p_0(1) (N-i)(N-i-1) \right]\\
&\quad  \ - p_0(2) \left(i^{2} + 3i +2 \right) \left( a(N-1) - p_0(1) \right)\\
& \leq 0.
\end{align*}

We deduce the result.
\end{proof}


\begin{proof}[Proof of Theorem \ref{th:spectral gap2}]
Without less of generality, we assume that $p_0(1)\geq  p_0(2)$ and we recall that $\rho \leq 0$. We would like to know where $\pi$ reaches its maximum $i^*$ since it will be a good candidate to estimate $B_{N,+}(i^*)$ and $B_{N,-}(i^*)$. From the previous lemma, to find it, we look when $\pi(i+1)/\pi(i)$ is close to one. We have, for all $i\in \{ 1, \dots, N\}$,

\begin{align}
\dfrac{\pi(i+1)}{\pi(i)} 
&= \dfrac{b_i}{d_{i+1}}=1 + \dfrac{(p_0(1)-p_0(2))(i-i_1)(i-i_2)}{(i+1)\left((a+p_0(1))(N-1)-ip_0(1)\right)}\label{expression},
\end{align}
where $i_1$ and $i_2$ are the two real numbers given by 
$$
i_1 = \dfrac{ N (a+b+ p_0(1)-p_0(2)) - (a+b+ 2p_0(1)) - \sqrt{\Delta}}{2(p_0(1)-p_0(2))}
$$
and
$$
i_2 = \dfrac{ N (a+b+ p_0(1)-p_0(2)) - (a+b+ 2p_0(1)) + \sqrt{\Delta}}{2(p_0(1)-p_0(2))},
$$
where 
\begin{align*}
\Delta &=[N(a+b+p_0(1) - p_0(2)) - (a+b+ 2p_0(1))]^2 \\
&\quad  \ - 4 (N-1)(bN-a - p_0(1))(p_0(1)-p_0(2)).
\end{align*}
In particular,  $1\leq i_1 \leq N \leq i_2$. Furthermore, if $\lfloor . \rfloor$ denotes the integer part then
$$
\frac{\pi(\lfloor i_1 \rfloor + 2 )}{\pi(\lfloor i_1 \rfloor +1)}  \leq 1 \leq \frac{\pi(\lfloor i_1 \rfloor +1)}{\pi(\lfloor i_1 \rfloor )}.
$$

Let us define $m_N = \lfloor i_1 \rfloor +1$ and $l_N= 2(\lfloor \sqrt{N} \rfloor +1)$. Using a telescopic product, we have
$$
\frac{\pi(m_N+l_N)}{\pi(m_N)} = \frac{\pi(m_N+l_N-\lfloor \sqrt{N} \rfloor -1)}{\pi(m_N)} \prod_{j=1}^{\lfloor \sqrt{N} \rfloor +1} \frac{\pi(m_N+l_N-j + 1)}{\pi(m_N+l_N-j)},
$$
Using Lemma \ref{lem: decreasing} and the previous calculus, we have that the sequences $(\pi(i))_{i \geq m_N}$ and $(\pi(i+1)/\pi(i))_{i \geq 0}$ are decreasing and then
$$
\frac{\pi(m_N+l_N)}{\pi(m_N)} \leq \left( \frac{\pi(m_N+l_N- \lfloor \sqrt{N} \rfloor )}{\pi(m_N+l_N-\lfloor \sqrt{N} \rfloor  -1)} \right)^{\lfloor \sqrt{N} \rfloor +1}.
$$

Now using \eqref{expression} and some equivalents, there exists a constant $\delta_1>0$ (not depending on $N$) such that
\begin{align*}
\frac{\pi(m_N+l_N- \lfloor \sqrt{N} \rfloor )}{\pi(m_N+l_N-\lfloor \sqrt{N} \rfloor  -1)}
&\leq 1- \dfrac{\delta_1}{\sqrt{N}}.
\end{align*}
Using the fact that $1-x \leq e^{-x}$ for all $x \geq 0$, we finally obtain $\pi(m_N+l_N)/\pi(m_N) \leq e^{-\delta_1}$.
Similar arguments entail the existence of $\delta_2>0$ (also not depending on $N$) such that $\pi(m_N-l_N)/\pi(m_N) \leq e^{-\delta_2}$.
In conclusion, using Lemma \ref{lem: decreasing}, we have shown  that for all $i \geq m_N$ and $j \leq m_N$, the following inequalities holds:
$$
\pi(i+l_N) \leq e^{-\delta_1} \pi(i) \ \text{ and } \  \pi(j-l_N) \leq e^{-\delta_2} \pi(j).
$$
We are now armed to evaluate $B_{N,+}(m_N)$ defined in \eqref{eq:def-B}. Firstly, using the expressions of the death rate $d$ and $m_N$, there exist $\gamma>0$ (not depending on $N$) and $N_0 \geq 0$ such that for all $N \geq N_0$ and all $i \geq m_N+1 $, $d_i \geq \gamma N$. Let us fix $x\geq m_N+1$, using that $(\pi(i))_{i\geq m_N}$ is decreasing,  we have
\begin{align*}
\sum_{y=m_N+1}^{x} \frac{1}{\pi(y)}
&= \sum_{\{i,k | m_N +1 \leq k-i l_N \leq x\}}  \frac{1}{\pi(k-i l_N)} \\ 
& \leq \sum_{\{i,k | m_N +1 \leq k-i l_N \leq x\}} \frac{e^{-\delta_1 i}}{\pi(k)} \\
& \leq \dfrac{1}{1-  e^{-\delta_1 }}  \sum_{k=x-l_N +1}^{x} \dfrac{1}{\pi(k)}\\ 
& \leq \dfrac{l_N}{\pi(x)} \dfrac{1}{1-  e^{-\delta_1}}.
\end{align*}
Similarly, we have
$$
\pi([x,N]) =\sum_{\{k,i | x\leq k+i l _N \leq N \}} \mathds{1}_{\{x+il_N \leq N\}} \Pi_N(k+il_N) \leq \frac{l_N \pi(x) }{1-  e^{-\delta_1}}.
$$
Using these three estimates, we deduce that, for every $N\geq N_0$, 
$$
B_{N,+}(m_N) \leq \frac{1}{\gamma N} \left( \dfrac{l_N}{1-  e^{-\delta_1}}\right)^{2} \leq \frac{1}{\gamma N} \left( \dfrac{2(\sqrt{N}+1)}{1-  e^{-\delta_1}}\right)^{2} \leq \frac{16}{\gamma (1-e^{-\delta_1})}. 
$$
The study of $B_{N,-}(m_N)$ is similar.
\end{proof}

\subsection{Simulation and evolution of the spectral gap (of the Fleming-Viot process)}

\label{sect:graph}

\begin{figure}[H]
\includegraphics[scale=0.5]{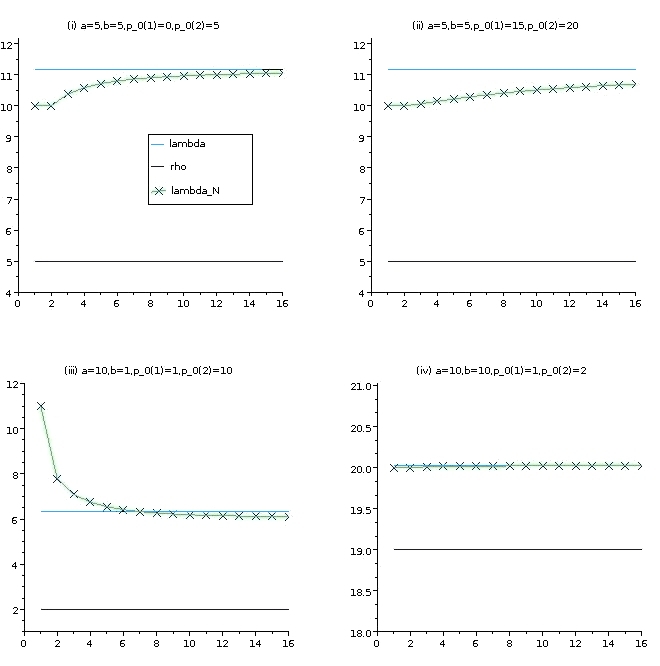}
\caption{Evolution of the spectral gap with respect to the number of particles. Details are described in Subsection \ref{sect:graph}}
\label{fig:2pt}
\end{figure}

As stated in Lemma \ref{lem:cvQSD2pt}, the rate of convergence $\lambda$ of the conditioned semi-group to the QSD is explicit. For a fixed $N\geq 1$, the Fleming-Viot process is a reversible Markov chain (for this example) on a finite space, it then converges to equilibrium at rate $\lambda_N$, where $\lambda_N$ is the eigenvalue of its generator closer to $0$.

Theorem \ref{th:spectral gap2} shows that the rate of convergence to equilibrium of the Fleming-Viot process is uniformly bounded. Nevertheless, there is some natural questions such as the convergence of $\lambda_N$ to $\lambda$ or the monotonicity of this sequence. Also, do we have $\lambda_N \leq \lambda$ or $\lambda_N\geq \lambda$?

When there are few particles, the generator of the Fleming-Viot process is a square matrix of size $2N$ and one can explicitly calculate its spectrum (with the help of a computer). In figure \ref{fig:2pt}, each graphic represents, with different parameters, the evolution of the spectral gap of the conditioned process $\lambda=\lambda_+-\lambda_-$ (detailed in Lemma \ref{lem:cvQSD2pt}), of the Fleming Viot particle system $\lambda_N$ and of the upper-bound $\rho$ in Theorem \ref{th:tl-general}, with respect to the number of particles $N$.

Graphics $(i)$ and $(ii)$ illustrate that when $\text{osc}(p_0)=p_0(2)-p_0(1)$ remains constant for two parameter choices then the Fleming-Viot dynamics are different although the conditioned semi-groups are the same. More $p_0(1)$ is large more jumps there are; these graphics seem to show that the rate of convergence  is dragged down by the interactions. Graphic $(iii)$ shows that $\lambda_N$ is neither increasing nor decreasing and neither upper nor lower than $\lambda$. Graphic $(iv)$ shows that when $\text{osc}(p_0)$ is small then all curves are close.

In any case, it seems that $\lambda_N$ converges to $\lambda$ but this point remains an open question.

\subsection{Correlations}

Using \cite[Theorem 2.6]{CT15}, we have
\begin{coro}[Correlations]
If $(\eta_t)_{t\geq 0}$ is a process generated by \eqref{eq:generator2point} then we have for all $t\geq 0$,
$$
\cov(\eta_{t}(k)/N,\eta_{t}(l)/N) \leq \frac{2}{N^2} \frac{1 - e^{- 2\rho t}}{\rho} \left(N (a \vee b) +  \sup(p_0) \frac{N^2}{N-1}\right).
$$
\end{coro}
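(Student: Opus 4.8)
The plan is to bound the covariance $\cov(\eta_t(k)/N,\eta_t(l)/N)$ by comparing the process with itself started from two different initial configurations and invoking the Wasserstein contraction estimate already established in this section (the corollary giving $\mathcal{W}_d(\mathrm{Law}(\eta_t),\mathrm{Law}(\eta'_t))\leq e^{-\rho t}\mathcal{W}_d(\mathrm{Law}(\eta_0),\mathrm{Law}(\eta'_0))$), exactly as in \cite[Theorem 2.6]{CT15}. Since this is a corollary of that cited theorem, the bulk of the work is to check that the generator \eqref{eq:generator2point} fits the hypotheses of \cite[Theorem 2.6]{CT15} and to identify the constants: the rates at which a single particle moves are $a\vee b$ for the ``mutation'' part and $\sup(p_0)\,\tfrac{N}{N-1}$-type bounds for the interaction part, and the contraction rate is $\rho=a+b-(\sup(p_0)-\inf(p_0))$.

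First I would recall the mechanism behind \cite[Theorem 2.6]{CT15}: one writes $\cov(\eta_t(k),\eta_t(l))$ via a coupling in which the two marginals of the coupled pair are run from $\eta_0$ and from an independent copy (or a size-biased modification), so that the covariance is controlled by $\E[d(\eta_t,\eta'_t)]$ where $(\eta_t,\eta'_t)$ is the coupled pair; the key input is that the coupling contracts at rate $\rho$ and that the ``speed'' at which $d$ can increase before contraction kicks in is governed by the total jump rate of a single particle. Concretely, the number of particles that can move per unit time out of any site, weighted appropriately, is at most $N(a\vee b)+\sup(p_0)\tfrac{N^2}{N-1}$; integrating the differential inequality $\partial_t \E[d_t]\leq (\text{rate}) - \rho\,\E[d_t]$ from $d_0=0$ yields the factor $\tfrac{1-e^{-2\rho t}}{\rho}$ (the $2\rho$ rather than $\rho$ coming from squaring/the $d^2$ in the covariance bound of \cite[Theorem 2.6]{CT15}).

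The second step is purely bookkeeping: substitute into the general bound of \cite[Theorem 2.6]{CT15} the explicit jump rates from \eqref{eq:generator2point}, namely $Q$-rates bounded by $a\vee b$ and killing rates $p_0(1),p_0(2)$ with $\sup(p_0)$ appearing in the interaction term $\eta(j)/(N-1)\leq N/(N-1)$, and divide by $N^2$ to pass from $\eta_t(k)$ to $\eta_t(k)/N$. This gives precisely
$$
\cov(\eta_t(k)/N,\eta_t(l)/N)\leq \frac{2}{N^2}\,\frac{1-e^{-2\rho t}}{\rho}\left(N(a\vee b)+\sup(p_0)\frac{N^2}{N-1}\right),
$$
as claimed.

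The main obstacle, such as it is, is not a difficulty of proof but a matter of correctly matching the notation and constants of \cite{CT15} to the present two-point setting — in particular verifying that the oscillation term entering $\rho$ is $\sup(p_0)-\inf(p_0)$ and that the prefactor counting single-particle jump intensities is the stated $N(a\vee b)+\sup(p_0)N^2/(N-1)$. One should also note that the bound is only informative when $\rho>0$; when $\rho\leq 0$ the right-hand side should be read via its limit $2t(N(a\vee b)+\sup(p_0)N^2/(N-1))/N^2$ as $\rho\to 0$, or simply regarded as vacuous, which is consistent with the fact that in that regime one controls the spectral gap instead, as in Theorem \ref{th:spectral gap2}.
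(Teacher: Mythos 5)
Your proposal matches the paper's treatment: the corollary is obtained exactly as you say, by invoking \cite[Theorem 2.6]{CT15} and substituting the two-point rates of \eqref{eq:generator2point} (jump rates bounded by $a\vee b$, interaction term bounded via $\sup(p_0)\,\eta(j)/(N-1)\leq \sup(p_0)\,N/(N-1)$, contraction rate $\rho=a+b-(\sup(p_0)-\inf(p_0))$), with no further argument given in the paper. Your added remarks on the mechanism behind the cited theorem and on the bound being vacuous for $\rho\leq 0$ are consistent with the paper's own discussion following the corollary.
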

If $\rho \leq 0$, the right-hand side of the previous inequality explodes as $t$ tends to infinity whereas these correlations are bounded by $1$. Nevertheless, using Theorem \cite[Theorem 2.6]{CT15}, Remark \cite[Remark 2.7]{CT15} and Inequality \eqref{eq:gap-2pts}, we can prove that there exists two constants $C'(N)$, depending on $N$, and $K$, which does not depend on $N$, such that
$$
\sup_{t\geq 0} \cov(\eta_{t}(k)/N,\eta_{t}(l)/N) \leq C'(N)= \frac{K C(N)}{N \lambda_N},
$$
where $C(N)$ is defined in \eqref{eq:gap-2pts}. Even if Theorem \ref{th:spectral gap2} gives an estimate of $\lambda_N$, $C(N)$ is not (completely) explicit and we do not know if the right-hand side of the previous expression tends to $0$ as $N$ tends to infinity. This example shows the difficulty of finding explicit and optimal rates of the convergence towards equilibrium and the decay of correlations.

\bibliographystyle{abbrv} 

\end{document}